\documentclass[12pt]{amsart}
\usepackage{amscd,amsmath,amsthm,amssymb}
\usepackage{mathtools}
\usepackage{mathrsfs}
\usepackage{comment}
\usepackage[all,tips]{xy}
\usepackage{color}
\usepackage{circuitikz}
\usepackage{caption}
\usepackage{enumitem} 

\definecolor{cadmiumgreen}{rgb}{0.0, 0.42, 0.24}
\usepackage[
colorlinks, citecolor=cadmiumgreen,
pagebackref,
pdfauthor={Robin de Jong, Farbod Shokrieh}, 
pdftitle={Metric graphs, cross ratios, and Rayleigh's laws},
pdfstartview ={FitV},
]{hyperref}

\usepackage[
alphabetic,
backrefs,
msc-links,
nobysame,
lite,
]{amsrefs} 

\usepackage{tikz, float} 
\usetikzlibrary {positioning}

\usetikzlibrary{calc,decorations.markings}
\usetikzlibrary{shapes,snakes}

\usepackage[left=3.5cm,top=3.5cm,right=3.5cm]{geometry}
\setlength\marginparwidth{45pt}

\newcommand*\isom{\xrightarrow{\sim}}

\def\RR{{\mathbb R}}

\def\EE{{\mathbb E}}

\def\bf{{\mathbf b}}

\def\Oc{{\mathbf O}}

\def\Sb{{\mathbf S}}

\def\xb{{\mathbf x}}

\def\Oc{{\mathcal O}}

\def\MA{{\mathcal A}}

\def\A{{\mathcal A}}

\def\Mc{{\mathcal M}}

\def\C{{\mathcal C}}

\def\B{{\mathbf B}}
\def\Bc{{\mathcal B}}
\def\Q{{\mathbf Q}}
\def\L{{\mathbf L}}
\def\D{{\mathbf D}}
\def\R{{\mathbf R}}
\def\I{{\mathbf I}}
\def\P{{\mathbf P}}
\def\M{{\mathbf M}}
\def\N{{\mathbf N}}
\def\Xib{{\mathbf \Xi}}
\def\F{\mathsf F}
\def\cc{\mathfrak c}
\def\ii{\mathfrak i}
\def\vv{\mathfrak v}

\def\opn#1#2{\def#1{\operatorname{#2}}}
\opn\depth{depth} 
\opn\codim{codim}
\opn\ini{in} 
\opn\LM{LM}
\opn\LC{LC}
\opn\NF{NF}
\opn\Merge{Merge}
\opn\sgn{sgn}
\opn\div{div} 
\opn\Div{Div} 
\opn\Pic{Pic}
\opn\Prin{Prin}
\opn\Del{Del}
\opn\op{op}
\opn\ends{ends}
\opn\indeg{indeg} 
\opn\sign{sign}
\opn\outdeg{outdeg}
\opn\red{red}
\opn\Spec{Spec} 
\opn\Supp{Supp} 
\opn\supp{supp} 
\opn\Ker{Ker} 
\opn\Coker{Coker} 
\opn\Hom{Hom}
\opn\Tor{Tor} 
\opn\id{id}
\opn\span{span}
\opn\Image{Image}
\opn\con{conv} 
\opn\relint{rel.int} 
\opn\vol{vol}
\opn\val{val}
\opn\Zh{Zh}
\opn\Ber{Ber}
\opn\Vor{Vor}
\opn\Vol{Vol}
\opn\Covol{Covol}
\opn\Jac{Jac}
\opn\Dir{Dir}
\opn\can{can}
\opn\syz{{\rm syz}}
\opn\spoly{{\rm spoly}}
\opn\LM{{\rm LM}}
\opn\lm{{\rm lm}}
\opn\lcm{{\rm lcm}} 
\opn\A{\mathcal A}
\opn\dist{dist}
\opn\pd{pd}
\opn\en{en}
\opn\PL{PL}
\opn\dmeasz{DMeas_0}
\opn\dmeas{DMeas}
\opn\T{T}
\opn\circu{circ}
\opn\cocirc{cocirc}
\opn\Proj{Proj}

\def\Implies{\ifmmode\Longrightarrow \else
        \unskip${}\Longrightarrow{}$\ignorespaces\fi}
\def\implies{\ifmmode\Rightarrow \else
        \unskip${}\Rightarrow{}$\ignorespaces\fi}
\def\iff{\ifmmode\Longleftrightarrow \else
        \unskip${}\Longleftrightarrow{}$\ignorespaces\fi}

\newtheorem{Theorem}{Theorem}[section]
\newtheorem{Lemma}[Theorem]{Lemma}
\newtheorem{Corollary}[Theorem]{Corollary}
\newtheorem{Proposition}[Theorem]{Proposition}

\theoremstyle{remark}
\newtheorem{Remark}[Theorem]{Remark}

\theoremstyle{definition}
\newtheorem{Example}[Theorem]{Example}
\newtheorem{Definition}[Theorem]{Definition}

\def\qed{\ifhmode\textqed\fi
      \ifmmode\ifinner\quad\qedsymbol\else\dispqed\fi\fi}
\def\textqed{\unskip\nobreak\penalty50
       \hskip2em\hbox{}\nobreak\hfil\qedsymbol
       \parfillskip=0pt \finalhyphendemerits=0}
\def\dispqed{\rlap{\qquad\qedsymbol}}

\numberwithin{equation}{section}

\tikzstyle{Cwhite}=[scale = .8,circle, fill = white, minimum size=3mm] 
\tikzstyle{Cgray}=[scale = .4,circle, fill = gray, minimum size=3mm] 
\tikzstyle{Cblack2}=[scale = .4,circle, fill = black, minimum size=5mm] 
\tikzstyle{Cblack}=[scale = .7,circle, fill = black, minimum size=3mm]
\tikzstyle{C0}=[scale = .9,circle, fill = black!0, inner sep = 0pt, minimum size=3mm]
\tikzstyle{C1}=[scale = .7,circle, fill = black!0, inner sep = 0pt, minimum size=3mm]
\tikzstyle{Cred}=[scale = .4,circle, fill = red, minimum size=3mm]

\ctikzset{bipoles/resistor/height=0.15}
\ctikzset{bipoles/resistor/width=0.4}

\begin{document}

\title{Metric graphs, cross ratios, and Rayleigh's laws}

\author{Robin de Jong}
\email{\href{mailto:rdejong@math.leidenuniv.nl}{rdejong@math.leidenuniv.nl}}

\author {Farbod Shokrieh}
\email{\href{mailto:farbod@math.cornell.edu}{farbod@math.cornell.edu}, \href{mailto:farbod@math.ku.dk}{farbod@math.ku.dk}}

\subjclass[2010]{
\href{https://mathscinet.ams.org/msc/msc2010.html?t=94C05}{94C05},
\href{https://mathscinet.ams.org/msc/msc2010.html?t=14T05}{14T05},
\href{https://mathscinet.ams.org/msc/msc2010.html?t=35J05}{35J05},
\href{https://mathscinet.ams.org/msc/msc2010.html?t=05C50}{05C50}}


\date{\today}

\begin{abstract}
We study a notion of cross ratios on metric graphs and electrical networks. We show that several known results immediately follow from the basic properties of cross ratios. We show that the projection matrices of Kirchhoff have nice (and efficiently computable) expressions in terms of cross ratios. Finally we prove a very general version of Rayleigh's law, relating energy pairings and cross ratios before and after contracting an edge segment. As a corollary, we obtain a quantitative version of {\em Rayleigh's monotonicity law} for effective resistances. Another consequence is an explicit description of the behavior of the potential kernel of the Laplacian operator under contractions.
\end{abstract}

\maketitle

\setcounter{tocdepth}{1}
\tableofcontents

\section{Introduction} \label{sec:Intro}
\renewcommand*{\theTheorem}{\Alph{Theorem}}

\subsection{Background}
If the resistances in an electrical network are decreased, the effective resistance between any two points can only decrease. This is known as {\em Rayleigh's monotonicity law}, first stated by Lord Rayleigh in \cite{Rayleigh}. Maxwell, in his {\em Treatise on Electricity and Magnetism}, regards this law as `self-evident' (\cite[Chapter VIII, Paragraph 306]{Maxwell}). The formal proof is rather straightforward as well (see, e.g., \cite[Chapter 4]{ds}); one first realizes that an effective resistance may be thought of as a `distance' (see Remark~\ref{rmk:Thomson}). Then Rayleigh's law boils down to the following elementary fact in linear algebra: if $W_1 \subseteq W_2 \subseteq V$ are normed vector spaces and $\mathbf{p} \in V$ then $\dist(\mathbf{p}, W_1) \geq \dist(\mathbf{p}, W_2)$. One of our results is a {\em quantitative} (or {\em effective}) version of this law (see Corollary~\ref{cor:EffRayleighIntro} below). We will describe precisely, in terms of the notion of `cross ratios',  the amount by which effective resistances decrease. 

\subsection{Our contributions}

Let $\Gamma$ be a metric graph (i.e. a length metric space homeomorphic to a topological graph) which is compact and connected. We may think of $\Gamma$ as a (resistive) electrical network. Consider the {\em $j$-function} on $\Gamma$ defined informally as follows: for $x, y, z \in \Gamma$ let $j_z(x,y; \Gamma)$ denote the electric potential at $x$ when one unit of current enters the network $\Gamma$ at $y$ and exits at $z$, with $z$ `grounded' (i.e. has zero potential). The effective resistance between two points $x, y \in \Gamma$ is defined as $r(x,y; \Gamma) \coloneqq j_y(x,x; \Gamma)$.

The $j$-function is the kernel of integration that inverts the (distributional) Laplacian operator  $\Delta$, so it appears naturally in the harmonic analysis of $\Gamma$. There is a very different interpretation of $j$-functions in the language of Gromov's theory of hyperbolic spaces: let $(x|y)_z$ denote the {\em Gromov product}  on $\Gamma$ with respect to the effective resistance distance function $r \colon \Gamma \times \Gamma \rightarrow \RR$ (see \S\ref{sec:GromovProd}). Then $(x|y)_z = j_z(x,y ; \Gamma)$ (see Lemma~\ref{lem:GromovProd} and Example~\ref{ex:GromovProd}). Motivated by this latter point of view, we define the {\em cross ratio function} $\xi$ on $\Gamma$ as follows: fix a point $q \in \Gamma$. For $x,y,z,w \in \Gamma$ let
\[
\xi(x,y , z,w ; \Gamma) \coloneqq j_q(x,z; \Gamma) +j_q(y,w; \Gamma) - j_q(x,w ; \Gamma) - j_q(y,z ; \Gamma) \,.
\]

The resulting function is easily seen to be independent of the base point $q$ (see Lemma~\ref{lem:CrossProp}). The $j$-function and the $r$-function are evaluations of the cross ratio function: 
\[ j_z(x,y ; \Gamma) = \xi(x,z , y,z ; \Gamma) \quad , \quad r(x,y ; \Gamma) = \xi(x,y , x,y ; \Gamma)\, .\]
 We further argue that many important properties and formulas in the theory of electrical networks are immediate consequences of the basic properties of cross ratios. For example, the `reciprocity theorem' in electrical networks is a consequence of the fact that cross ratios are independent of the choice of base points (see Example~\ref{ex:recip}).

The cross ratio function is, in turn, an evaluation of the {\em energy pairing}. Let $\dmeasz(\Gamma)$ denote the real vector space of discrete measures  on $\Gamma$ with zero total mass. For $\nu_1 , \nu_2 \in \dmeasz(\Gamma)$, the energy pairing is defined by
\[
\langle \nu_1 , \nu_2 \rangle_{\en}^{\Gamma} \coloneqq \int_{\Gamma \times \Gamma} j_q (x,y;\Gamma) \mathrm{d}\nu_1(x) \mathrm{d}\nu_2(y) \, ,
\]
which is, again, independent of the base point $q \in \Gamma$. One can easily check 
\[ \xi(x,y , z,w ; \Gamma) = \langle \delta_x-\delta_y, \delta_z-\delta_w \rangle_{\en}^{\Gamma}\, .\] 
A vast generalization of the reciprocity theorem is the statement that the energy pairing can be computed using {\em any} generalized inverse of the Laplacian operator. See Lemma~\ref{lem:pairings}, Example~\ref{ex:recip}, and Remark~\ref{rmk:gen_recip}.

We can now state our {\em Rayleigh's law for energy pairings}. For an edge segment $e$ of $\Gamma$, let $\Gamma/e$ denote the network obtained by `short-circuiting' the segment $e$.
\begin{Theorem}\label{thm:GenRayleighEnergyIntro} (=Theorem~\ref{thm:GenRayleighEnergy})
Let $\Gamma$ be a metric graph. Let $e$ be an edge segment of $\Gamma$ with endpoints $\partial e = \{e^-, e^+\}$, and let $\nu_1, \nu_2 \in \dmeasz(\Gamma)$.  Then
\[
\langle \nu_1, \nu_2 \rangle_{\en}^{\Gamma/e} = 
\langle \nu_1, \nu_2 \rangle_{\en}^{\Gamma} 
- \frac{\langle \nu_1, \delta_{e^+} - \delta_{e^-} \rangle_{\en}^{\Gamma} \ \,
\langle \delta_{e^+} - \delta_{e^-}, \nu_2 \rangle_{\en}^{\Gamma}}
{r(e^-, e^+; \Gamma)}\, .
\]
\end{Theorem}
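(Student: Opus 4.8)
The structure of the claimed identity is that of a rank-one (Sherman--Morrison type) correction. Writing $\mu \coloneqq \delta_{e^+} - \delta_{e^-} \in \dmeasz(\Gamma)$ and recalling the evaluation formula $r(x,y;\Gamma) = \xi(x,y,x,y;\Gamma)$, the denominator is exactly the self-pairing $\langle \mu, \mu\rangle_{\en}^{\Gamma} = r(e^-,e^+;\Gamma)$, so the assertion reads
\[
\langle \nu_1, \nu_2\rangle_{\en}^{\Gamma/e} = \langle \nu_1, \nu_2\rangle_{\en}^{\Gamma} - \frac{\langle \nu_1, \mu\rangle_{\en}^{\Gamma}\,\langle \mu, \nu_2\rangle_{\en}^{\Gamma}}{\langle \mu,\mu\rangle_{\en}^{\Gamma}}\,.
\]
This is precisely the bilinear form obtained from $\langle\cdot,\cdot\rangle_{\en}^{\Gamma}$ by projecting out the $\mu$-direction, which is the algebraic shadow of the physical fact that short-circuiting $e$ forces $e^-$ and $e^+$ to be equipotential. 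The plan is to make this superposition precise.

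First I would pass from measures to potentials. For $\nu \in \dmeasz(\Gamma)$ set $f_\nu(x) \coloneqq \int_\Gamma j_q(x,y;\Gamma)\,\mathrm{d}\nu(y)$; then $\Delta f_\nu = \nu$ and $\langle \nu_1,\nu_2\rangle_{\en}^{\Gamma} = \int_\Gamma f_{\nu_2}\,\mathrm{d}\nu_1$, all independent of $q$. Write $u \coloneqq f_{\nu_2}$ for the potential of the source $\nu_2$ on $\Gamma$ and $h \coloneqq f_\mu$ for the potential of $\mu$. Evaluating against $\mu = \delta_{e^+}-\delta_{e^-}$ gives $h(e^+) - h(e^-) = \int_\Gamma h \, \mathrm{d}\mu = \langle \mu,\mu\rangle_{\en}^{\Gamma} = r(e^-,e^+;\Gamma)$ and, more generally, $u(e^+)-u(e^-) = \int_\Gamma u\,\mathrm{d}\mu = \langle \mu, \nu_2\rangle_{\en}^{\Gamma}$.

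Next I would realize the potential on $\Gamma/e$ by superposition. Short-circuiting $e$ means that the solution $\tilde u$ for the source $\nu_2$ on $\Gamma/e$ must take equal values at $e^-$ and $e^+$; the extra current enforcing this flows through the short circuit and, pulled back to $\Gamma$, is a source of the form $-I\mu$. I therefore posit $\tilde u = u - I h$ and fix $I$ by the equipotential constraint $\tilde u(e^+) = \tilde u(e^-)$, which yields
\[
I = \frac{u(e^+) - u(e^-)}{h(e^+) - h(e^-)} = \frac{\langle \mu, \nu_2\rangle_{\en}^{\Gamma}}{r(e^-,e^+;\Gamma)}\,.
\]
Since $\mu$ collapses to the zero measure on $\Gamma/e$, one has $\Delta_{\Gamma/e}\tilde u = \nu_2$, so $\tilde u$ descends to $\Gamma/e$ as the required potential. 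Using that $\tilde u$ is constant on $e$ and evaluating the pairing on $\Gamma/e$,
\[
\langle \nu_1,\nu_2\rangle_{\en}^{\Gamma/e} = \int_\Gamma \tilde u\,\mathrm{d}\nu_1 = \int_\Gamma u\,\mathrm{d}\nu_1 - I \int_\Gamma h\,\mathrm{d}\nu_1 = \langle \nu_1,\nu_2\rangle_{\en}^{\Gamma} - \frac{\langle \mu,\nu_2\rangle_{\en}^{\Gamma}\,\langle \nu_1,\mu\rangle_{\en}^{\Gamma}}{r(e^-,e^+;\Gamma)}\,,
\]
which, after using symmetry of the pairing, is the desired identity.

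The main obstacle is the rigorous justification of the superposition step: that the potential theory on the contracted network $\Gamma/e$ is faithfully captured by the correction $\tilde u = u - Ih$ on $\Gamma$. Concretely I must verify that $\tilde u$ genuinely descends to a well-defined function on $\Gamma/e$ with Laplacian $\nu_2$ there (equivalently, that the short-circuit current is the \emph{only} correction needed), that the base-point normalization is compatible between $\Gamma$ and $\Gamma/e$, and that the argument is unaffected when $\nu_1$ or $\nu_2$ places mass at $e^\pm$ or on the interior of $e$. I expect these points to be dispatched cleanly using the characterization of the energy pairing through an arbitrary generalized inverse of the Laplacian (Lemma~\ref{lem:pairings}) together with the defining properties of the $j$-function, but they are where the real content of the contraction lies.
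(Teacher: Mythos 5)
Your superposition argument is sound, and it is a genuinely different proof from the one in the paper. The paper works at the chain level with a fixed model $G$: it expresses the pairing as $\langle\nu_1,\nu_2\rangle_{\en} = [\boldsymbol{\gamma}_1,\pi'_G(\boldsymbol{\gamma}_2)]$ (Proposition~\ref{prop:energyformula}), shows that contraction corresponds to following $\pi'_G$ with the rank-one orthogonal projection of $H_1(G,\RR)^{\perp}$ onto the orthogonal complement of $\pi'_G(e)$ (Proposition~\ref{prop:diagram}), and then carries out a matrix computation with the cross-ratio matrix, finding that the matrix of $\pi'_{G/e}$ is $\Sb = \D^{-1}\Xib - \frac{1}{r(e^-,e^+)}\D^{-1}(\Xib[e])(\Xib[e])^{\T}$. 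You perform the same rank-one correction, but at the level of potentials: $\tilde u = u - Ih$ with $I$ the short-circuit current. Your route is more elementary and model-free --- it uses only the $j$-function and the definition of the energy pairing, not the projection machinery of \S\ref{sec:projs} --- and it makes the electrical meaning transparent. The paper's route buys the explicit matrix identity for $\pi'_{G/e}$ (of independent interest and reused elsewhere), and it reduces every statement about the quotient graph to finite-dimensional linear algebra, so no distributional Laplacian computation on $\Gamma/e$ is ever needed.

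Two of the points you defer deserve comment. The descent verification is genuinely needed but short, \emph{provided} the supports of $\nu_1,\nu_2$ avoid the interior of $e$: in that case $\Delta\tilde u = \nu_2 - I\mu$ vanishes on the interior of $e$, so $\tilde u$ is affine there; having equal endpoint values it is constant on all of $e$, hence descends; and since its slopes along $e$ vanish, the sum of incoming slopes at $p_e$ equals $(\nu_2-I\mu)(\{e^-\}) + (\nu_2-I\mu)(\{e^+\}) = \nu_2(\{e^-\})+\nu_2(\{e^+\})$, i.e.\ the descended function is a $\PL(\Gamma/e)$ potential for the pushforward of $\nu_2$; finally, any such potential computes the pairing against a total-mass-zero measure, since two potentials differ by a constant. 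However, your hope that mass on the interior of $e$ can also be ``dispatched cleanly'' is misplaced: the identity is \emph{false} in that case, so this situation must be excluded rather than handled. For instance, let $\Gamma$ be a circle consisting of $e$ of length $a$ and a second edge of length $b$, and take $\nu_1=\nu_2=\delta_p-\delta_{e^-}$ with $p$ the midpoint of $e$. Then the left-hand side vanishes (the pushforward of $\nu_i$ to $\Gamma/e$ is the zero measure), while the right-hand side equals $\frac{(a/2)(a/2+b)}{a+b} - \frac{ab}{4(a+b)} = \frac{a}{4} > 0$. The paper's proof makes the same restriction implicitly, by choosing a model $G$ with $e\in E(G)$ and $\nu_1,\nu_2\in\dmeasz(G)$, which forces the supports into $V(G)$ and hence off the interior of $e$; mass at the endpoints $e^{\pm}$ is harmless, as your computation shows.
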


One immediately also obtains {\em Rayleigh's laws for cross ratios, $j$-functions, and $r$-functions}, as these are all evaluations of energy pairings. For example, here is a quantitative version of Rayleigh's monotonicity law promised earlier.
\begin{Corollary}\label{cor:EffRayleighIntro}(=Corollary~\ref{cor:EffRayleigh})
\[
r(x,y ; \Gamma/e) =r(x,y ; \Gamma) -  \frac{\xi(x,y , e^-, e^+; \Gamma) ^2}{r(e^-, e^+; \Gamma)} \, .
\]
In particular, $ r(x,y ; \Gamma/e) \leq r(x,y ; \Gamma)$.
\end{Corollary}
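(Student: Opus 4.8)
The plan is to derive the corollary as a direct specialization of Theorem~\ref{thm:GenRayleighEnergyIntro}, exploiting the fact that both the $r$-function and the cross ratio $\xi$ are evaluations of the energy pairing on measures of the form $\delta_x - \delta_y$. Concretely, I would apply the theorem with the single choice $\nu_1 = \nu_2 = \delta_x - \delta_y \in \dmeasz(\Gamma)$ and then read off each term.

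With this choice, the left-hand side becomes $\langle \delta_x - \delta_y, \delta_x - \delta_y \rangle_{\en}^{\Gamma/e}$, which by the identity $r(x,y;\Gamma) = \langle \delta_x - \delta_y, \delta_x - \delta_y \rangle_{\en}^{\Gamma}$ recorded in the introduction equals $r(x,y;\Gamma/e)$; likewise the first term on the right becomes $r(x,y;\Gamma)$. For the correction term I would use the symmetry of the energy pairing together with the identity $\xi(x,y,z,w;\Gamma) = \langle \delta_x - \delta_y, \delta_z - \delta_w \rangle_{\en}^{\Gamma}$ to fold the numerator $\langle \delta_x - \delta_y, \delta_{e^+} - \delta_{e^-} \rangle_{\en}^{\Gamma}\,\langle \delta_{e^+} - \delta_{e^-}, \delta_x - \delta_y \rangle_{\en}^{\Gamma}$ into the single square $\xi(x,y,e^+,e^-;\Gamma)^2$. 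The one sign subtlety is that reversing the last two arguments of $\xi$ merely negates it, so $\xi(x,y,e^+,e^-;\Gamma)^2 = \xi(x,y,e^-,e^+;\Gamma)^2$; the squaring renders the orientation of $e$ irrelevant and matches the stated formula, while the denominator $r(e^-,e^+;\Gamma)$ is unchanged.

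For the ``in particular'' claim I would simply observe that the subtracted quantity is manifestly non-negative: the numerator $\xi(x,y,e^-,e^+;\Gamma)^2$ is a square and the denominator $r(e^-,e^+;\Gamma)$ is a strictly positive effective resistance whenever $e^- \neq e^+$, so $r(x,y;\Gamma/e) \leq r(x,y;\Gamma)$, recovering Rayleigh's monotonicity law. I do not expect a genuine obstacle here, since all the content is carried by Theorem~\ref{thm:GenRayleighEnergyIntro} and the corollary is a bookkeeping specialization. The only points deserving a moment of care are the symmetry of the pairing, needed to collapse the two numerator factors into one square, and the non-degeneracy $r(e^-,e^+;\Gamma) > 0$, which guarantees the quotient is well defined.
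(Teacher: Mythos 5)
Your proposal is correct and takes essentially the same route as the paper: the paper obtains this by specializing Theorem~\ref{thm:GenRayleighEnergy} to $\nu_1=\delta_x-\delta_y$, $\nu_2=\delta_z-\delta_w$ (Corollary~\ref{cor:GenRayleighCross}) and then setting $(z,w)=(x,y)$, which composes to exactly your choice $\nu_1=\nu_2=\delta_x-\delta_y$. Your treatment of the sign, $\xi(x,y,e^+,e^-;\Gamma)^2=\xi(x,y,e^-,e^+;\Gamma)^2$, and of the strict positivity of $r(e^-,e^+;\Gamma)$ (using $e^-\neq e^+$) fills in the bookkeeping the paper leaves implicit.
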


We now describe the main ingredients of the proof of Theorem~\ref{thm:GenRayleighEnergyIntro}. Let $G$ be a model of $\Gamma$, which is a finite graph together with a length function $\ell$ on its edge set. Fix an orientation $\Oc$ on $G$. The space of $1$-chains $C_1(G,\RR)\simeq \bigoplus_{e \in \Oc}  \RR e$ is endowed with a canonical bilinear form: for distinct $e,f \in \Oc$ we let $[e,e] = \ell(e)$ and $[e,f] = 0$. The first homology group $H_1(G, \RR)$ is naturally a subspace of the inner product space $C_1(G,\RR)$. The theory of electrical networks is essentially the study of the orthogonal projections (see \S\ref{sec:networkproblems}):
\[
\begin{aligned}
\pi_G \colon & C_1(G,\RR) \twoheadrightarrow H_1(G, \RR) \, , \\
\pi'_G \colon &C_1(G,\RR) \twoheadrightarrow H_1(G, \RR)^{\perp} \, .
\end{aligned}
\]

Kirchhoff, in his seminal paper \cite{Kirchhoff}, gave a beautiful description of these projections (in the basis given by $\Oc$) as a certain average over spanning trees. We will review his description in \S\ref{sec:projST}. It turns out there is another convenient description in terms of cross ratios. Let $m$ denote the number of edges of $G$, and let $\Xib$ be the $m \times m$ {\em matrix of cross ratios}:
\[
\Xib \coloneqq \left(\xi(e^-, e^+ , f^- , f^+) \right)_{e,f \in \Oc} \, . 
\]
Let $\D$ be the $m \times m$ diagonal matrix whose diagonal entries are $\ell(e)$ for $e \in \Oc$, and let $\I$ be the identity matrix.
\begin{Theorem}\label{thm:crossprojIntro}(=Proposition~\ref{prop:crossproj})
In the basis given by $\Oc$,  the matrix of $\pi_G$ is $\I- \D^{-1} \Xib$, and the matrix of $\pi'_G$ is $\D^{-1} \Xib$.
\end{Theorem}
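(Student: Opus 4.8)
The plan is to route everything through the boundary map and the combinatorial Laplacian, and then to identify $\D^{-1}\Xib$ with the orthogonal projection $\pi'_G$ by checking its action on the two summands of the orthogonal decomposition $C_1(G,\RR)=H_1(G,\RR)\oplus H_1(G,\RR)^{\perp}$.

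First I would record what the entries of $\Xib$ really are. Writing $\partial\colon C_1(G,\RR)\to C_0(G,\RR)$ for the boundary map, so that $\partial e=\delta_{e^+}-\delta_{e^-}\in\dmeasz(\Gamma)$, the identity $\xi(x,y,z,w;\Gamma)=\langle\delta_x-\delta_y,\delta_z-\delta_w\rangle_{\en}^{\Gamma}$ together with the sign invariance $\langle-\nu_1,-\nu_2\rangle_{\en}=\langle\nu_1,\nu_2\rangle_{\en}$ gives
\[
\Xib_{ef}=\xi(e^-,e^+,f^-,f^+;\Gamma)=\langle\partial e,\partial f\rangle_{\en}^{\Gamma}\,.
\]
Next I would fix the standard inner product on $C_0(G,\RR)$, let $\partial^*\colon C_0\to C_1$ be the adjoint of $\partial$ with respect to this and the weighted form on $C_1$, and verify the two structural facts I need: that $H_1(G,\RR)^{\perp}=\Image\partial^*$ (this is just $(\ker\partial)^{\perp}=\Image\partial^*$, with $H_1=\ker\partial$), and that $L\coloneqq\partial\partial^*$ is exactly the weighted graph Laplacian with conductances $1/\ell(e)$. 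By Lemma~\ref{lem:pairings} the energy pairing of vertex-supported measures is then computed by any generalized inverse of $L$, so $\langle\mu,\nu\rangle_{\en}^{\Gamma}={}^t\mu\,L^+\nu$ for zero-sum $\mu,\nu$.

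With these in hand the computation is short. Applying $\D^{-1}\Xib$ to a chain $c$ and reading off the coefficient of $e$ gives, by bilinearity, $(\D^{-1}\Xib\,c)_e=\tfrac{1}{\ell(e)}\langle\partial e,\partial c\rangle_{\en}^{\Gamma}$. If $c\in H_1=\ker\partial$ then $\partial c=0$ and this coefficient vanishes, so $\D^{-1}\Xib$ kills $H_1$. If instead $c=\partial^* v\in H_1^{\perp}$, then $\partial c=Lv$ and the coefficient becomes $\tfrac{1}{\ell(e)}\,{}^t(\partial e)\,L^+L\,v$; since $L^+Lv\equiv v$ modulo $\ker L=\RR\mathbf 1$ and $\partial e=\delta_{e^+}-\delta_{e^-}$ is orthogonal to the constants, this equals $\tfrac{1}{\ell(e)}(v(e^+)-v(e^-))$, which is precisely the coefficient of $e$ in $\partial^* v=c$. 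Thus $\D^{-1}\Xib$ is the identity on $H_1^{\perp}$ and zero on $H_1$; as these are orthogonal complements spanning $C_1$, it equals the orthogonal projection $\pi'_G$, whence $\pi_G=\I-\D^{-1}\Xib$.

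The step I expect to be the main obstacle is pinning down the identifications in the middle paragraph rather than the final computation: one must match the various sign and orientation conventions, confirm that $L=\partial\partial^*$ with the chosen inner product on $C_0$ really is the Laplacian governing the $j$-function on $\Gamma$ (so that Lemma~\ref{lem:pairings} applies to the vertex-supported measures $\partial e$), and check the adjoint identity $H_1^{\perp}=\Image\partial^*$. Once the dictionary ``$\Xib_{ef}=\langle\partial e,\partial f\rangle_{\en}$, energy pairing $={}^t(\cdot)\,L^+(\cdot)$, $L=\partial\partial^*$'' is established, the idempotent characterization makes the independence from the choice of generalized inverse transparent, since only the class of $L^+Lv$ modulo constants enters and $\partial e$ annihilates constants.
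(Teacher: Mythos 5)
Your proposal is correct and takes essentially the same approach as the paper's proof of Proposition~\ref{prop:crossproj}: both arguments rest on the identity $\Xib = \B^{\T}\L\B$ of \eqref{eq:XiMatrix} (i.e.\ Lemma~\ref{lem:pairings} applied to the zero-sum measures $\partial e$), on the factorization $\Q=\B\D^{-1}\B^{\T}$ of \eqref{eq:factor} (your $L=\partial\partial^{*}$), and on the identification $H_1(G,\RR)^{\perp}={\rm Im}\,\D^{-1}\B^{\T}$. The only difference is organizational: the paper computes the projection of an arbitrary chain by solving the Dirichlet problem $\Q\xb=\B\bf$ with a generalized inverse, whereas you verify that the candidate matrix $\D^{-1}\Xib$ annihilates $\Ker\B=H_1(G,\RR)$ and fixes ${\rm Im}\,\D^{-1}\B^{\T}$ pointwise --- the same computation read in the opposite direction.
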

Theorem~\ref{thm:crossprojIntro} is proved by a very straightforward linear algebraic argument. However, these expressions in terms of cross ratios are quite useful for applications. Moreover, unlike Kirchhoff's classical description, our description is very efficient for computations: both projection matrices can be computed in time at most $O(n^\omega)$, where $n$ is the number of vertices of $G$ and $\omega$ is the exponent for the matrix multiplication algorithm (see Remark~\ref{rmk:efficient}).

A key observation is that the energy pairing can be computed using the projection matrix $\pi'_G$. See Proposition~\ref{prop:energyformula} for the  precise statement. 
It turns out that the change in Kirchhoff's projection matrices before and after contraction of an edge segment can be explicitly described in terms of cross ratios: the matrix of $\pi'_{G/e}$, with respect to the basis given by $\Oc$, is
\[
\Sb = \D^{-1}\Xib - \frac{1}{r(e^-, e^+)} \D^{-1} (\Xib [e]) (\Xib [e])^{\T}   \, ,
\]
where $[e]$ denotes the characteristic vector of $e \in \Oc$. See Proposition~\ref{prop:diagram} and the proof of Theorem~\ref{thm:GenRayleighEnergy}. This, together with Proposition~\ref{prop:energyformula}, leads us to the generalized versions of Rayleigh's law.

\medskip

Our work was partially motivated by questions in non-archimedean analytic (and tropical) geometry. Any metric graph arises as a skeleton of some Berkovich analytic curve over a non-archimedean field. The potential theory on such curves is more or less the same as the study of electrical networks. In  \cite{dJSh} we make extensive use of various descriptions of projection matrices given in the present paper, as well as our generalized Rayleigh's laws, to compute invariants arising from Arakelov geometry.

\subsection{Structure of the paper}
In \S\ref{sec:graphs} we set the notation and terminology for graphs and electrical networks and discuss their correspondences. 
In \S\ref{sec:laplacian} various notions of Laplacian operators and their compatibilities are reviewed.  
The notion of $j$-functions and their relation to Gromov products are discussed in \S\ref{sec:potential}.   
In \S\ref{sec:energy} we study the notions of energy and Dirichlet pairings, and discuss their relationship. We show that energy pairings can be computed using {\em any} generalized inverse of the Laplacian matrix. In \S\ref{sec:cross} cross ratios are defined and some of their basic properties are established. 
In \S\ref{sec:projs} we will review Kirchhoff's classical work on projection matrices arising in electrical networks. We then present our description in terms of cross ratios (Theorem~\ref{thm:crossprojIntro}). 
In \S\ref{sec:Rayleigh} we state and prove our generalized and quantitative versions of Rayleigh's laws (including Theorem~\ref{thm:GenRayleighEnergyIntro} and Corollary~\ref{cor:EffRayleighIntro}), relating various evaluations of energy pairings before and after edge contractions.

\medskip

Throughout, we have made an attempt to keep the paper as self-contained as possible. For example we review classical topics such as Kirchhoff's description of projection matrices. But, in doing so, we have tried to present a new perspective on how one may think about various statements. For a good highlight of our attempt, the reader might be interested in our description of Ohm's law in \S\ref{sec:Ohm}.


\renewcommand*{\theTheorem}{\arabic{section}.\arabic{Theorem}}
\section{Graphs and networks} \label{sec:graphs}
\subsection{Weighted graphs}
By a {\em weighted graph} we mean a finite weighted connected multigraph $G$ with no loop edges. We denote the set of vertices of $G$ by $V(G)$ and the set of edges of $G$ by $E(G)$, and let $n = |V (G)|$ and $m = |E(G)|$. We assume both $V(G)$ and $E(G)$ to be non-empty, and this gives $n \geq 2$. The weights of edges are determined by a {\em length function} 
\[ \ell \colon E(G) \to \RR_{>0}\, .\] 
We let $\mathbb{E}(G) = \{e, \bar{e} \colon e \in E(G)\}$ denote the set of {\em oriented edges}. We have $\bar{\bar{e}} = e$. An {\em orientation} $\Oc$ on $G$ is a partition $\mathbb{E}(G) = \Oc \cup \overline{\Oc}$, where $\overline{\Oc} = \{\bar{e} \colon e \in \Oc\} $. We have an obvious extension of the length function 
\[\ell \colon \mathbb{E}(G) \rightarrow  \RR_{>0}\] 
by requiring $\ell(e) = \ell(\bar{e})$. There is a map
$\mathbb{E}(G) \rightarrow V(G) \times V(G)$ sending an oriented edge $e$ to $(e^+, e^-)$. 

\subsection{Metric graphs and models}
A {\em metric graph} is a pair $(\Gamma, \ell)$ consisting of a compact connected topological graph $\Gamma$, together with an {\em inner metric} $\ell$. We will always assume $\Gamma$ is not a single point. In this case one can alternatively define a metric graph as a compact connected metric space $\Gamma$ such that every point has a neighborhood isometric to a star-shaped set, endowed with the path metric. We often assume $\ell$ is implicitly defined, and refer to $\Gamma$ as the metric graph.

The points of $\Gamma$ that have valency different from $2$ are called the {\em branch points} of $\Gamma$. A {\em vertex set} for $\Gamma$ is a finite set  $V \subset \Gamma$ containing all branch points such that each connected component $c$ of $\Gamma \setminus V$ has the property that the closure of $c$ in $\Gamma$ is isometric with a closed interval. Each vertex set $V$ of $\Gamma$ naturally determines a weighted graph $G$ with non-empty set of vertices $V(G)=V$ and with non-empty set of edges $E(G)$ given by the connected components of $\Gamma \setminus V$. We call such a weighted graph $G$ determined by a vertex set a {\em model} of $\Gamma$. 

Given a vertex set $V$ of $\Gamma$, the closure $e$ in $\Gamma$ of a connected component of $\Gamma \setminus V$ is called an {\em edge segment} of $\Gamma$. Note that there is a natural bijective correspondence between the set of edge segments determined by $V$ and the edge set $E(G)$ of the associated weighted graph. Given an edge segment $e$ of $\Gamma$ we denote by $\partial e = \{e^-,e^+\} \subset V$ the set of boundary points of $e$. We use the notation $\partial e = \{e^-,e^+\}$ for the set of boundary points even when there is no orientation present. We hope that this does not lead to confusion.

\subsection{Electrical networks}
Let $\Gamma$ be a metric graph and $G$ be a model of $\Gamma$. We will think of $\Gamma$ (or $G$) as an electrical network in which each edge $e \in E(G)$ is a resistor having resistance $\ell(e)$. The vertex set corresponding to $G$ may be thought of as the set of {\em access points} of the network, i.e. the points at which the external current or voltage sources can be attached or measurements can be done. See Figure~\ref{fig:graphs}.

When studying the `potential theory' on a metric graph $\Gamma$, it is convenient to always fix an (arbitrary) model $G$, and think of it as an electrical network as above. This will often allow us to give concrete linear algebraic formulas for quantities and functions of interest. We refer to \cite[Chapter II]{Bollobas} and \cite{Biggs} for an introduction to the theory of electrical networks from this point of view.

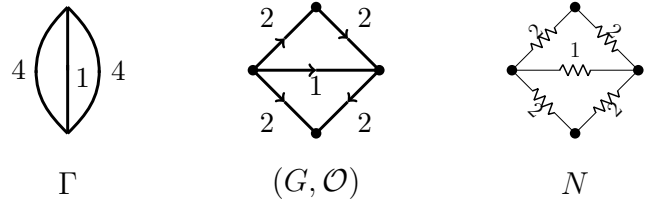
\begin{figure}[h!]
$$
\begin{xy}
(0,0)*+{
	\scalebox{.7}{$
	\begin{tikzpicture}
	\draw[black, ultra thick, -] (0,1.2) to [out=-45,in=90] (.6,-.05);
	\draw[black, ultra thick] (.6,0) to [out=-90,in=45] (0,-1.2);
	\draw[black, ultra thick, -] (0,1.2) to [out=-135,in=90] (-.6,-.05);
	\draw[black, ultra thick] (-.6,0) to [out=-90,in=135] (0,-1.2);
	\draw[black, ultra thick, -] (0,1.2) -- (0,0);
	\draw[black, ultra thick] (0,0.1) -- (0,-1.2);
	\end{tikzpicture}
	$}
};
(-6.5,0)*+{\mbox{{\smaller $4$}}};
(2,-1)*+{\mbox{{\smaller $1$}}};
(6.75,-0.05)*+{\mbox{{\smaller $4$}}};
(0,-15)*+{\Gamma};
\end{xy}
\ \ \ \ \ \ \ \ \ \ 
\begin{xy}
(0,0)*+{
	\scalebox{.7}{$
	\begin{tikzpicture}
	\draw[black, ultra thick, ->] (-1.2,0) to (-.6,.6);
	\draw[black, ultra thick] (-.6,.6) to (0,1.2);
	\draw[black, ultra thick, ->] (-1.2,0) to (0,0);
	\draw[black, ultra thick] (0,0) to (1.2,0);
	\draw[black, ultra thick, ->] (-1.2,0) to (-.6,-.6);
	\draw[black, ultra thick] (-.6,-.6) to (0,-1.2);
	\draw[black, ultra thick, ->] (0,1.2) to (.6,.6);
	\draw[black, ultra thick] (.6,.6) to (1.2,0);
	\draw[black, ultra thick, ->] (1.2,0) to (.6,-.6);
	\draw[black, ultra thick] (.6,-.6) to (0,-1.2);
	\fill[black] (0,1.2) circle (.1);
	\fill[black] (0,-1.2) circle (.1);
	\fill[black] (1.2,0) circle (.1);
	\fill[black] (-1.2,0) circle (.1);
	\end{tikzpicture}
	$}
};
(-6.5,7)*+{\mbox{{\smaller $2$}}};
(0,-2)*+{\mbox{{\smaller $1$}}};
(6.5,7)*+{\mbox{{\smaller $2$}}};
(-6.5,-7)*+{\mbox{{\smaller $2$}}};
(6.5,-7)*+{\mbox{{\smaller $2$}}};
(0,-15)*+{(G, \Oc)};
\end{xy}
\ \ \ \ \ \ \ \ \ \ 
\begin{xy}
(0,0)*+{
	\scalebox{.7}{$
	\begin{tikzpicture}
	\draw[black] (-1.2,0) to[R=$2$] (0,1.2);
	\draw[black] (-1.2,0) to[R=$1$] (1.2,0);
	\draw[black] (0,-1.2) to[R=$2$] (-1.2,0);
	\draw[black] (0,1.2) to[R=$2$] (1.2,0);
	\draw[black] (1.2,0) to[R=$2$] (0,-1.2);
	\fill[black] (0,1.2) circle (.1);
	\fill[black] (0,-1.2) circle (.1);
	\fill[black] (1.2,0) circle (.1);
	\fill[black] (-1.2,0) circle (.1);
	\end{tikzpicture}
	$}
};
(0,-15)*+{N};
\end{xy}
\ \ \ \ \ \ \ \ \ \ 
\!\!\!\!\!\!\!\!\!\!\!\!\!\!\!\!\!\!\!
$$
\caption{A metric graph $\Gamma$, a model $G$ with an orientation $\Oc$, and the corresponding electrical network $N$.}\label{fig:graphs}
\end{figure}

\section{Laplacian operators} \label{sec:laplacian}
\subsection{Distributional Laplacians on metric graphs}
Let $\Gamma$ be a metric graph. We let $\PL(\Gamma)$ be the real vector space consisting of all continuous piecewise affine real valued functions on $\Gamma$ that can change slope finitely many times on each closed edge segment. 
Let $\Delta$ be the Laplacian operator in the sense of distributions; for $\phi \in \PL(\Gamma)$, its Laplacian $\Delta(\phi)$ is the discrete measure
\[
\Delta(\phi) = \sum_{p \in \Gamma} \sigma_p(\phi) \delta_p \, ,
\]
where $\delta_p$ is the usual delta (Dirac) measure centered at $p$, and $\sigma_p(\phi)$ is the sum of incoming slopes of $\phi$ in all tangent directions at $p$. 

Let $\dmeasz(\Gamma)$ denote the real vector space of discrete measures $\nu$ on $\Gamma$ with $\nu(\Gamma) = 0$. Any $\nu \in \dmeasz(\Gamma)$ is of the form $\nu = \sum_{p \in \Gamma}a_p\delta_p$, where $a_p \in \RR$, all but finitely $a_p$'s are zero, and  $\sum_{p \in \Gamma}a_p = 0$. 
One can easily check $\Delta(\phi) \in \dmeasz(\Gamma)$. Let $\RR \subset \PL(\Gamma)$ denote the space of constant functions on $\Gamma$. Then $\Delta$ induces an isomorphism of vector spaces $ \PL(\Gamma) / \RR \xrightarrow{\sim} \dmeasz(\Gamma)$. 

\subsection{Combinatorial Laplacians on weighted graphs}\label{sec:comb_lap}
Let $G$ be a weighted graph. We denote by $\Mc(G) = \Hom(V(G),\RR) = C^0(G, \RR)$ the real vector space of of $\RR$-valued functions on $V(G)$. Let $\Delta$ be the (combinatorial) Laplacian operator; for $\psi \in \Mc(G)$, its Laplacian $\Delta(\psi)$ is the measure
\[
\Delta(\psi) = \sum_{p \in \Gamma} \Delta_p(\psi) \delta_p \, ,
\]
where
\[
\Delta_p(\psi) \coloneqq \sum_{e = \{p, v\} \in E(G)} (\psi(p) - \psi(v))/{\ell(e)} \, .
\]

Let $\dmeasz(G)$ denote the real vector space of discrete measures $\nu$ on $V(G)$ with $\nu(V(G)) = 0$. Any $\nu \in \dmeasz(G)$ is of the form $\nu = \sum_{p \in V(G)}a_p\delta_p$, with $a_p \in \RR$ and  $\sum_{p \in V(G)}a_p = 0$. 
One can easily check $\Delta(\psi) \in \dmeasz(G)$. Let $\RR \subset \Mc(G)$ denote the space of constant functions on $V(G)$. Then $\Delta$ induces an isomorphism of vector spaces $ \Mc(G) / \RR \xrightarrow{\sim} \dmeasz(G)$.

\subsection{Compatibilities and Laplacian matrices} \label{sec:compatible}
The (distributional) Laplacian $\Delta$ and the (combinatorial) Laplacian $\Delta$ are compatible in the following sense. Let $\phi \in \PL(\Gamma)$ and let $G$ be a model of $\Gamma$ such that $V(G)$ contains all those points of $\Gamma$ at which $\psi$ changes slopes. Let $\psi \in \Mc(G)$ denote the function obtained from $\phi$ by restriction. Then $\sigma_p(\phi) = \Delta_p(\psi)$.

The (combinatorial) Laplacian operator on a weighted graph $G$ can be conveniently presented by its {\em Laplacian matrix}. Let $\{v_1, \ldots , v_n\}$ be a labeling of $V(G)$. The Laplacian matrix $\Q$ associated to $G$ is the $n \times n$ matrix $\Q = (q_{ij})$, where for $i \ne j$
\[
q_{ij} = - \sum_{e =\{v_i , v_j\} \in E(G)}{{1}/{\ell(e)}} \, .
\]
The diagonal entries are determined by forcing the matrix to have zero-sum rows: 
\[
q_{ii} =  -\sum_{j\ne i} {q_{ij}} = \sum_{e = \{v_i, v\} \in E(G)}{{1}/{\ell(e)}} \, .
\]
It is well-known that $\Q$ is symmetric, has rank $n-1$ and that its kernel consists of constant functions (see, e.g., \cite{Biggs}).

Let $H \subset \RR^n$ be the subspace of vectors $(x_1,\ldots,x_n)$ such that $\sum_{i=1}^n x_i = 0$. The labeling $\{v_1, \ldots , v_n\}$ allows one to fix isomorphisms 
\[[\,\cdot\,] \colon \Mc(G) \xrightarrow{\sim} \RR^n \quad , \quad [\,\cdot\,] \colon \dmeasz(G) \xrightarrow{\sim} H \, .\]
Then for all $\psi \in \Mc(G)$ we have
\[
[\Delta(\psi)] = \Q [\psi]\, .
\]

The Laplacian matrix of $G$ can also be expressed in terms of the {\em incidence matrix} of $G$. Let $\{v_1, \ldots , v_n\}$ be a labeling of $V(G)$ as before. Fix an orientation $\Oc = \{e_1, \ldots, e_m\}$ on $G$. The incidence matrix $\B$ associated to $G$ is the $n \times m$ matrix $\B=(b_{ij})$, where 
\[
b_{ij} = 
\begin{cases}
+1 &\text{ : } e_j^{+} = v_i \\
-1 &\text{ : } e_j^{-} = v_i \\
0  &\text{ : otherwise. } 
\end{cases}
\]
Let $\D$ denote the $m \times m$ diagonal matrix with diagonal entries $\ell(e_i)$ for $e_i \in \Oc$. We have 
\begin{equation}\label{eq:factor}
\Q = \B \D^{-1}\B^{\T} \, ,
\end{equation}
where $(\cdot)^{\T}$ denotes the usual matrix transpose operation.

\section{Potential kernels} \label{sec:potential}
\subsection{The $j$-function} \label{sec:jfunc}
A fundamental solution of the Laplacian is given by $j$-functions. We follow the notation of \cite{cr}. See also \cite{zhang93, br, bffourier ,fm, sw} for more details and formulas. 

Let $\Gamma$ be a metric graph and fix two points $y,z \in \Gamma$. We denote by $j_z(\cdot \, , y ; \Gamma)$ the unique function in $\PL(\Gamma)$ satisfying:
\begin{itemize}
	\item[(i)] $\Delta \left(j_z(\cdot \, , y; \Gamma)\right )= \delta_y - \delta_z$, 
	\item[(ii)] $j_z(z,y; \Gamma) = 0$.
\end{itemize}
If the metric graph $\Gamma$ is clear from the context, we will write $j_z(x,y)$ instead of $j_z(x,y; \Gamma)$. Observe that $j_z(\cdot,y)$ is a harmonic function on $\Gamma \setminus \{ y,z \}$.

\begin{Remark} \label{rmk:jelectric}
If we think of $\Gamma$ as an electrical network, the function $j_z(x, y)$ has a nice interpretation (Figure~\ref{fig:jfunction}): it denotes the electric potential at $x$ if one unit of current enters the network at $y$ and exits at $z$, with $z$ `grounded' (i.e. has zero potential). See \S\ref{sec:Dirichlet}. 

\ctikzset{bipoles/length=.7cm}
\begin{figure}[h!]
\begin{tikzpicture}
\node [cloud, draw,cloud puffs=10.2,cloud puff arc=120, aspect=2, inner ysep=1em, gray] (cloud) at (0, 0) {};
	\fill[black] (0,-.5) circle (.1);
	\fill[black] (.7,.3) circle (.1);
	\fill[black] (-.8,0) circle (.1);
	\draw (0,-.5) to (0,-1) node[ground]{};
	\draw (0,-.9) to (1.8,-.9) to[dcisource] (1.8, .3) to (.7,.3);
	\draw (-1.8,-.9) [o-] to (0,-.9);
	\draw (-1.8,0) [o-] to (-.8,0);
	\fill[black] (0,-.9) circle (.05);
	\node at (-2, -.45) {\tiny{$j_z(x,y)$}};
	\node at (-2, -.9) {$-$};
	\node at (-2, 0) {$+$};
	\node at (0, -.2) {\tiny{$z$}};
	\node at (.7, 0) {\tiny{$y$}};
	\node at (-.8, .3) {\tiny{$x$}};
	\node at (2.2, -.3) {\tiny{$1$}};
\end{tikzpicture}
\caption{Electrical network interpretation of the $j$-function.}\label{fig:jfunction}
\end{figure}
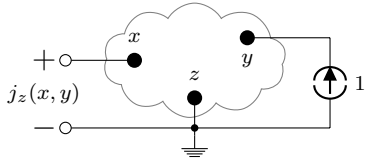
\end{Remark}
It follows immediately from electrical network theory that $j_z(\cdot \, , y ; \Gamma) \in \PL(\Gamma)$ exists and is unique. For a modern exposition, as well as an explicit integral formula for the $j$-function, see \cite[\S3.3]{sw} (see, also, Remark~\ref{rmk:metric_cross}).

\begin{Remark} \label{rmk:jprops}
The following properties of the $j$-function are expected from the electrical network interpretation, and are easy to prove (see, e.g., \cite[Lemma 2.10]{cr})
\begin{itemize}
\item[(i)] $j_z(x,y)$ is jointly continuous in all three variables $x,y,z \in \Gamma$.
\item[(ii)] $j_z(x,y) = j_z(y,x)$.
\item[(iii)] $0 \leq j_z(x,y) \leq j_z(x,x)$.
\item[(iv)] $j_z(x,x) = j_x(z,z)$.
\end{itemize}
\end{Remark}
\subsection{Effective resistance and Gromov products} \label{sec:GromovProd}
Following the electrical network interpretation, it makes sense to make the following definition. 

\begin{Definition}
The {\em effective resistance} between two points $x, y \in \Gamma$ is
\[
r(x,y) \coloneqq j_y(x,x) = j_x(y,y) \, .
\]
If we want to clarify the effective resistance is measured on $\Gamma$, we will use the notation $r(x,y; \Gamma)$ instead. 
\end{Definition}

Let $X$ be a set and let $d \colon X \times X \to \RR$ be a symmetric map.  For $x, y, z \in X$, one defines (see, e.g., \cite[Definition~1.19]{bh}) the {\em Gromov product} $(x|y)_z$ of $x$ and $y$ relative to $z$ by the formula
\[(x|y)_z \coloneqq\frac{1}{2}\left(d(x, z) + d(y, z) - d(x, y)\right)\, .\]
Note that $d$ satisfies the triangle inequality if and only if for all $x, y, z \in X$ the Gromov product $(x|y)_z$ is non-negative. 
\begin{Lemma}\label{lem:GromovProd}
Let $\Gamma$ be a metric graph, and let $r \colon \Gamma \times \Gamma \rightarrow \RR$ be the effective resistance function. Then $j_z(x,y)$ is precisely the Gromov product $(x|y)_z$ applied to the pair $(\Gamma, r)$. 
\end{Lemma}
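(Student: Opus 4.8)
The plan is to show that the Gromov product $(x|y)_z$ built from the effective resistance function $r$ coincides with $j_z(x,y)$ by writing out the definition and reducing everything to evaluations of $j$. Recalling that $r(a,b) = j_b(a,a) = j_a(b,b)$, I would substitute into the Gromov product formula to obtain
\[
(x|y)_z = \tfrac{1}{2}\bigl( j_z(x,x) + j_z(y,y) - j_y(x,x) \bigr)\,,
\]
so the claim becomes the identity $j_z(x,x) + j_z(y,y) - j_y(x,x) = 2\,j_z(x,y)$. My strategy is to prove this last identity, and the most robust route is to characterize both sides by their distributional Laplacian together with a normalization, invoking the uniqueness of solutions to the Laplace equation guaranteed by the defining properties (i) and (ii) of the $j$-function.

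The key step is to view everything as a function of the first variable $x$ and compare Laplacians. Fix $y$ and $z$ and set $f(x) \coloneqq j_z(x,y)$; by definition $\Delta f = \delta_y - \delta_z$ and $f(z) = 0$. On the other side, I would introduce $g(x) \coloneqq \tfrac12\bigl(j_z(x,x) + j_z(y,y) - j_y(x,x)\bigr)$, treating $j_z(y,y)$ as a constant in $x$. The function $x \mapsto j_z(x,x) = r(x,z)$ and the function $x \mapsto j_y(x,x) = r(x,y)$ are the ones whose Laplacians I must compute. The main technical point is the Laplacian of the effective-resistance function $x \mapsto r(x,w)$ for fixed $w$; using the symmetry $r(x,w) = j_w(x,x)$ together with the harmonicity of $j_w(\cdot, x)$ away from its poles and the known fact (standard in this potential theory, cf. the references \cite{cr,sw}) that $\Delta_x\, r(x,w) = 2(\delta_w - \delta_x)$ in a suitable sense, I would compute $\Delta g$ and check it equals $\delta_y - \delta_z$. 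Finally, evaluating $g$ at $x = z$ gives $g(z) = \tfrac12\bigl(j_z(z,z) + j_z(y,y) - j_y(z,z)\bigr) = \tfrac12\bigl(0 + r(y,z) - r(y,z)\bigr) = 0$, matching the normalization $f(z)=0$. By the uniqueness in the definition of the $j$-function, $f = g$, which is exactly the desired identity.

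The step I expect to be the main obstacle is justifying the Laplacian computation $\Delta_x\, r(x,w) = 2(\delta_w - \delta_x)$ cleanly, since $r(x,w) = j_w(x,x)$ involves the diagonal of $j$ and differentiating in the doubled variable requires care. An attractive alternative that sidesteps this entirely is to work directly from the symmetric bilinear structure: the proposed identity is equivalent, after reindexing, to the statement
\[
j_z(x,y) = \tfrac12\bigl( r(x,z) + r(y,z) - r(x,y) \bigr)\,,
\]
and I can verify this by a purely algebraic manipulation once I express each $r$ and each $j$ in terms of a single base-pointed kernel. Concretely, using property (iv), $r(x,z) = j_z(x,x)$, $r(y,z) = j_z(y,y)$, $r(x,y) = j_y(x,x) = j_x(y,y)$, and then appealing to the additive "cocycle" relation among $j$-functions with different base points (which follows from comparing distributional Laplacians and the normalization, exactly as above but applied to the simpler translation identity $j_z(x,y) - j_w(x,y) = j_z(x,w) - \text{const}$), I can collapse the right-hand side to $j_z(x,y)$. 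Either way the engine is the same: reduce to a linear identity among $j$-functions and close the argument by the uniqueness built into properties (i) and (ii). I would present the Laplacian-comparison version as the main line, since it is the most self-contained, and flag the cocycle identity as the one lemma about $j$ that does the real work.
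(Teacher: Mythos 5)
Your main line has a genuine gap, and it sits exactly where you feared: the identity $\Delta_x\, r(x,w) = 2(\delta_w - \delta_x)$ is not a known fact needing care --- it is false, and not even well-formed (the Laplacian in $x$ of a function of $x$ is a measure in $x$; ``$\delta_x$'' has no meaning there). Worse, the function $x \mapsto r(x,w)$ does not lie in $\PL(\Gamma)$ at all: on a circle of circumference $L$ one has $r(x,w) = d(L-d)/L$ with $d$ the arc-length distance to $w$, which is piecewise \emph{quadratic}. Its Laplacian, in the extended sense of \cite{zhang93,cr,br}, is $2(\mu_{\mathrm{can}} - \delta_w)$ under this paper's sign conventions, where $\mu_{\mathrm{can}}$ is Zhang's canonical measure, which has a continuous part (on the circle, $\mathrm{d}t/L$). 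Consequently your function $g$ is not a priori in $\PL(\Gamma)$, so the uniqueness built into properties (i) and (ii) --- which the paper states only for $\PL(\Gamma)$ --- does not apply to it; and the computation of $\Delta g$ can only be rescued by the fact that the non-delta part of $\Delta_x r(x,w)$ is \emph{independent of} $w$, so that it cancels in $r(x,z)-r(x,y)$ and $\Delta g = \delta_y - \delta_z$ survives. But that independence is precisely the Chinburg--Rumely/Zhang theorem on the existence of the canonical measure: a strictly deeper input than the lemma you are proving, and one requiring a Laplacian theory beyond $\PL(\Gamma)$. So the ``most self-contained'' version is, as written, wrong, and after repair it is the least self-contained.

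Your flagged alternative, by contrast, is sound and should be promoted to the main line, since every function in it is an honest $j$-function and hence lies in $\PL(\Gamma)$. Comparing Laplacians in $x$ (both sides have Laplacian $\delta_w - \delta_z$, and harmonic $\PL$ functions are constant, e.g.\ by the isomorphism $\PL(\Gamma)/\RR \simeq \dmeasz(\Gamma)$) and evaluating at $x=z$ gives the cocycle identity $j_z(x,y) - j_w(x,y) = j_z(x,w) - j_w(z,y)$. Specializing $w=x$ and using $j_x(x,y)=0$ together with the symmetry of Remark~\ref{rmk:jprops}~(ii) yields $j_z(x,y) = r(x,z) - j_x(y,z)$; swapping $x$ and $y$ yields $j_z(x,y) = r(y,z) - j_y(x,z)$; relabelling $y \leftrightarrow z$ in the first yields $j_y(x,z) = r(x,y) - j_x(y,z)$; adding the first two and substituting the third gives $2\, j_z(x,y) = r(x,z)+r(y,z)-r(x,y)$, which is the lemma. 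Note that this is genuinely different from both arguments in the paper: the official proof checks the identity on tripods and reduces the general case by circuit reduction, while Example~\ref{ex:GromovProd} evaluates the cross ratio $\xi(x,y,x,y)$ at the two base points $x$ and $z$ and invokes base-point independence (Lemma~\ref{lem:CrossProp}, resting on the linear algebra of Lemma~\ref{lem:pairings}). Your repaired argument is closest in spirit to Example~\ref{ex:GromovProd} --- both are change-of-base-point identities --- but it runs entirely on the defining properties of $j$, with no models, matrices, or circuit reductions, which is a genuine (if modest) gain in self-containedness.
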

\begin{proof}
For `tripods' the equality $j_z(x,y)=(x|y)_z$ is immediate (as observed in \cite[Remark B.7]{BR_Book}) -- see Figure~\ref{fig:gromov}. The general case follows from the tripod version by applying standard `circuit reduction' techniques (see, e.g., \cite[\S5.5]{circuit2011}). 
\end{proof}

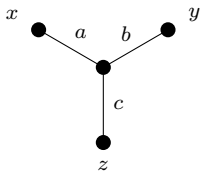
\begin{figure}[h!]
\begin{tikzpicture}
	\fill[black] (.86,.5) circle (.1);
	\fill[black] (0,-1) circle (.1);
	\fill[black] (-.86, .5) circle (.1);
	\fill[black] (0,0) circle (.1);
	\draw[black] (0,-1) to (0,0) to (.86,.5);
	\draw[black] (0,0) to (-.86,.5);
	\node at (1.21,.7 ) {\tiny{$y$}};
	\node at (-1.21,.7) {\tiny{$x$}};
	\node at (0, -1.3) {\tiny{$z$}};
	\node at (-.3, .45) {\tiny{$a$}};
	\node at (.3, .45) {\tiny{$b$}};
	\node at (.2, -.5) {\tiny{$c$}};
\end{tikzpicture}
\caption{A `tripod' with edge lengths $a,b,c$. Clearly $r(x,z) = a+c $, $r(y,z) = b+c $, $r(x,y) = a+b$, and $j_z(x,y) = c$.}\label{fig:gromov}
\end{figure}

\begin{Remark} \phantomsection \label{rmk:gromov} 
\begin{itemize}
\item[]
\item[(i)] We will give a direct proof (avoiding circuit reductions) of Lemma~\ref{lem:GromovProd} using cross ratios in Example~\ref{ex:GromovProd}. 
\item[(ii)]  By Remark~\ref{rmk:jprops}~(iii) we have $j_z(x,y) \geq 0$. Therefore Lemma~\ref{lem:GromovProd} has the immediate corollary that the effective resistance function satisfies the triangle inequality. The maximum principle for harmonic functions together with Remark~\ref{rmk:jprops}~(iii) furthermore implies that $r(x,y) \geq 0$ with equality if and only if $x=y$. We obtain the well-known fact (see, e.g., \cite{kr}) that the effective resistance is a distance function on $\Gamma$.
\end{itemize}
\end{Remark}
\subsection{Computing $j$-functions} \label{sec:computeJ}

Let $G$ be an arbitrary model of $\Gamma$. One can explicitly compute the quantities $j_q(p,v) \in \RR$ for $q,p,v \in V(G)$ using linear algebra (see \cite[\S 3]{fm}) as follows: fix a labeling of $V(G)$ as before, and let $\Q$ be the corresponding Laplacian matrix. Let $\Q_q$ be the  $(n-1)\times(n-1)$ matrix obtained from $\Q$ by deleting the row and column corresponding to $q \in V(G)$ from $\Q$. The matrix $\Q_q$ is invertible, see e.g.\ Remark~\ref{rmk:weights}~(ii). Let $\L_q$ be the $n \times n$ matrix obtained from $\Q_q^{-1}$ by inserting zeros in the row and column corresponding to $q$. One can easily check that 
\begin{equation}\label{eq:Qq}
\Q\L_q = \I + \R_q  \, ,
\end{equation}
where $\I$ is the $n \times n$ identity matrix and $\R_q$ has all $-1$ entries in the row corresponding to $q$ and has zeros elsewhere. 
It follows from \eqref{eq:Qq} and the compatibility of various Laplacians in \S\ref{sec:laplacian} that:
\[
\L_q = (j_q(p,v))_{p,v \in V(G)} \, .
\]
\begin{Remark} \phantomsection \label{rmk:Lq}
\begin{itemize}
\item[]
\item[(i)] In Corollary~\ref{cor:jformula} we give another explicit formula for computing the $j$-function on weighted graphs.
\item[(ii)] Clearly $\R_q\Q = \mathbf{0}$, where $\mathbf{0}$ is the $n \times n$ zero matrix. Therefore $\L_q$ is a {\em generalized inverse} of $\Q$, in the sense that $\Q\L_q\Q = \Q$.
\item[(iii)] Computing $\L_q$ takes time at most $O(n^\omega)$, where
$\omega$ is the exponent for the matrix multiplication algorithm (currently $\omega < 2.38$).
\end{itemize}
\end{Remark}
\section{Energy pairings} \label{sec:energy}
In this section, we briefly study two useful pairings. We remark that both pairings can be defined and studied on larger vector spaces (see, e.g., \cite{br} for more general statements). Here we restrict our attention to those spaces that are relevant to our work, and give more explicit descriptions, statements, and proofs.
\begin{Definition}
Let $\Gamma$ be a metric graph. 
The {\em energy pairing} 
\[
\langle \cdot , \cdot \rangle_{\en} \colon \dmeasz(\Gamma) \times \dmeasz(\Gamma) \rightarrow \RR
\]
is defined by
\[
\langle \nu_1 , \nu_2 \rangle_{\en} \coloneqq \int_{\Gamma \times \Gamma} j_q (x,y) \mathrm{d}\nu_1(x) \mathrm{d}\nu_2(y) \, .
\]
for a fixed $q \in \Gamma$. 
\end{Definition}
If we want to clarify $\Gamma$, we will use the notation $\langle \nu_1 , \nu_2 \rangle_{\en}^{\Gamma}$ instead.

It follows from Lemma~\ref{lem:pairings} below that the energy pairing is indeed independent of the choice of $q$. A closely related concept is the {\em Dirichlet pairing}
\[
\langle \cdot , \cdot \rangle_{\Dir} \colon \PL(\Gamma) \times \PL(\Gamma) \rightarrow \RR
\] 
defined by
\[
\langle \phi_1 , \phi_2 \rangle_{\Dir} \coloneqq \int_{\Gamma} \phi_1 \Delta(\phi_2) = \int_{\Gamma} \phi_2 \Delta(\phi_1) \, .
\]
If $\nu_1=\Delta(\phi_1)$ and $\nu_2 = \Delta(\phi_2)$ then 
\[
\langle \nu_1 , \nu_2 \rangle_{\en} = \langle \phi_1 , \phi_2 \rangle_{\Dir} \, .
\]
Note that this equality does not depend on the choice of $\phi_1$ and $\phi_2$, which are well-defined only up to constant functions.

The energy pairing (and the Dirichlet pairing) can be computed using linear algebra: let $G$ be a model of $\Gamma$ such that $\nu_1, \nu_2 \in \dmeasz(G)$. Then $\langle \nu_1 , \nu_2 \rangle_{\en}$ can be computed using the (combinatorial) energy pairing
\[
\langle \cdot , \cdot \rangle_{\en} \colon \dmeasz(G) \times \dmeasz(G) \rightarrow \RR
\]
defined by
\begin{equation} \label{eq:en_pair}
\langle \nu_1 , \nu_2 \rangle_{\en} \coloneqq \sum_{p,v \in V(G)} \nu_1(p) j_q(p,v) \nu_2(v) = [\nu_1]^{\T} \L_q [\nu_2] \, .
\end{equation}
Likewise, let $G$ be a model of $\Gamma$ such that $V(G)$ contains all those points of $\Gamma$ at which $\phi_1$ or $\phi_2$ changes slopes. Let $\psi_i \in \Mc(G)$ denote the function obtained from $\phi_i$ by restriction. Then $\langle \phi_1 , \phi_2 \rangle_{\Dir}$ can be computed using the (combinatorial) Dirichlet pairing
\[
\langle \cdot , \cdot \rangle_{\Dir} \colon \Mc(G) \times \Mc(G) \rightarrow \RR
\]
defined by 
\[
\langle \psi_1 , \psi_2 \rangle_{\Dir} \coloneqq [\psi_1]^{\T} \Q [\psi_2] \, . 
\]

\begin{Lemma} \label{lem:pairings}
Let $G$ be a weighted graph. Fix a labeling of $V(G)$ and let $\L$ be any generalized inverse of the Laplacian matrix $\Q$ (i.e. $\Q\L\Q = \Q$). Then the symmetric bilinear form on $\dmeasz(G)$ defined by
\[
\langle \nu_1 , \nu_2 \rangle_{\en} = [\nu_1]^{\T} \L [\nu_2]
\]
is independent of the choice of the generalized inverse $\L$, and is positive definite.
\end{Lemma}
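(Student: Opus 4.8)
The plan is to reduce the expression $[\nu_1]^\T \L [\nu_2]$ to something manifestly independent of $\L$ by exploiting the defining relation $\Q\L\Q = \Q$ together with the symmetry of $\Q$. The key preliminary observation is that the subspace $H = [\dmeasz(G)] \subset \RR^n$ of zero-sum vectors coincides with the image of $\Q$. Indeed, $\Q$ is symmetric with kernel equal to the line of constant vectors, so $\Image(\Q) = (\Ker \Q)^{\perp} = \mathbf{1}^{\perp} = H$. Consequently, for $\nu_1, \nu_2 \in \dmeasz(G)$ we may choose vectors $\xb, \yb \in \RR^n$ with $[\nu_1] = \Q\xb$ and $[\nu_2] = \Q\yb$.

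First I would establish independence. Substituting and using $\Q^{\T} = \Q$ gives
\[
[\nu_1]^{\T} \L [\nu_2] = (\Q\xb)^{\T} \L (\Q\yb) = \xb^{\T} \Q \L \Q \yb = \xb^{\T} \Q \yb \, ,
\]
where the last equality is exactly the generalized-inverse identity $\Q\L\Q = \Q$. The right-hand side does not involve $\L$ at all, so the value of the pairing is the same for every generalized inverse. (In particular it agrees with the value computed from the specific generalized inverse $\L_q$ of \S\ref{sec:computeJ}, recovering the base-point independence of the energy pairing.)

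Next I would prove positive definiteness using the same reduction together with the factorization $\Q = \B\D^{-1}\B^{\T}$ from \eqref{eq:factor}. For $\nu \in \dmeasz(G)$ with $[\nu] = \Q\xb$, the computation above yields
\[
[\nu]^{\T} \L [\nu] = \xb^{\T} \Q \xb = \xb^{\T} \B\D^{-1}\B^{\T} \xb = (\B^{\T}\xb)^{\T} \D^{-1} (\B^{\T}\xb) \, .
\]
Since $\D$ is diagonal with strictly positive entries $\ell(e_i)$, the matrix $\D^{-1}$ is positive definite, so this quantity is $\geq 0$, with equality if and only if $\B^{\T}\xb = 0$. But $\B^{\T}\xb = 0$ forces $[\nu] = \Q\xb = \B\D^{-1}(\B^{\T}\xb) = 0$, i.e.\ $\nu = 0$. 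Hence the form is positive definite on $\dmeasz(G)$.

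The argument is essentially complete once the identification $H = \Image(\Q)$ is in place; the one point requiring a little care is precisely the ability to write each $[\nu_i]$ as $\Q$ applied to some vector, since this is what lets the relation $\Q\L\Q = \Q$ act and what makes the $\L$-dependence disappear. Beyond that the computation is routine, and I expect no serious obstacle.
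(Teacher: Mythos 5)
Your proof is correct and follows essentially the same route as the paper's: write $[\nu_i]=\Q\xb_i$ (the paper gets this from the isomorphism $\Mc(G)/\RR \simeq \dmeasz(G)$ induced by $\Delta$, you from $\Image(\Q)=(\Ker\Q)^{\perp}$), cancel $\L$ via $\Q\L\Q=\Q$ and symmetry of $\Q$, and derive positive definiteness from the factorization $\Q=\B\D^{-1}\B^{\T}$. The only minor difference is the equality case: the paper invokes that the kernel of $\D^{-\frac{1}{2}}\B^{\T}$ is the space of constant functions, whereas you deduce $\nu=0$ directly from $\B^{\T}\xb=0$ via $[\nu]=\B\D^{-1}\B^{\T}\xb$, a slightly more self-contained way to close the argument.
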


\begin{proof}
Independence from $\L$ follows from the fact that, if $[\nu_i] = \Q[\psi_i]$ then 
\[
\langle \nu_1 , \nu_2 \rangle_{\en} = [\nu_1]^{\T} \L [\nu_2] = [\psi_1]^{\T} \Q \L \Q [\psi_2] = [\psi_1]^{\T} \Q [\psi_2] = \langle \psi_1 , \psi_2 \rangle_{\Dir} \, .
\]
Positive definiteness follows from the factorization \eqref{eq:factor} (see also \cite[Lemma 3.5]{fm}): let $[\nu] = \Q[\psi]$. Then
\[
\langle \nu , \nu \rangle_{\en} = [\psi]^{\T} \Q [\psi] = \|\D^{-\frac{1}{2}}\B^{\T} [\psi] \|_2 \, .
\]
The kernel of $\D^{-\frac{1}{2}}\B^{\T}$ is the space of constant functions in $\Mc(G)$.
\end{proof}

\begin{Remark}
A canonical choice for the generalized inverse of $\Q$ in Lemma~\ref{lem:pairings} is the `Moore--Penrose pseudoinverse' $\Q^+$. In fact, it is straightforward to check $\Q^+ = \frac{1}{n}\sum_{q \in V(G)} \L_q$ (see \cite[Construction 3.2, Construction 3.3]{fm}). In what follows, we find it more natural to work with $\L_q$'s directly.
\end{Remark}

The energy and Dirichlet pairings have the following interpretation in the language of electrical networks: consider $\nu \in \dmeasz(G)$ as an external `current source' attached to the network $G$. Then $\langle \nu , \nu \rangle_{\en}$ is precisely the total energy dissipated (per unit time) in $G$.

\section{Cross ratios} \label{sec:cross}
In this section, we introduce the notion of cross ratios for metric graphs. We remark that this notion is already mentioned (in passing) in \cite[Remark B.12]{BR_Book}. We establish some basic properties of these cross ratios and provide some basic examples.

\begin{Definition} \label{def:xi}
Let $\Gamma$ be a metric graph and fix $q \in \Gamma$. We define the {\em cross ratio} function (with respect to the base point $q$) $\xi_q \colon \Gamma^4 \rightarrow \RR$  by 
\[
\xi_q(x,y , z,w) \coloneqq j_q(x,z) +j_q(y,w) - j_q(x,w) - j_q(y,z) \,.
\]
If we want to clarify $\Gamma$, we will use the notation $\xi_q(x,y , z,w ; \Gamma)$ instead.
\end{Definition}

\begin{Lemma} \phantomsection \label{lem:CrossProp}
\begin{itemize}
\item[]
\item[(a)] $\xi(x,y , z,w)  \coloneqq \xi_q(x,y , z,w)$ is independent of the choice of $q$.
\item[(b)] $\xi(x,y , z,w) = \xi( z,w,x,y)$.
\item[(c)] $\xi(y,x , z,w) = - \xi( x, y, z,w)$.
\end{itemize}
\end{Lemma}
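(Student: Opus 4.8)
The plan is to establish each of the three claimed identities directly from Definition~\ref{def:xi}, using only the linearity of the expression in the $j$-function and the elementary symmetry properties of $j$ recorded in Remark~\ref{rmk:jprops}. Parts (b) and (c) are purely formal manipulations and will require no independence statement; part (a) is the one genuinely substantive claim, and I expect it to be the main obstacle.

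For part (a), the natural strategy is to show that the difference $\xi_q(x,y,z,w) - \xi_{q'}(x,y,z,w)$ vanishes for any two base points $q, q' \in \Gamma$. Writing out the definition, this difference is a sum of eight $j$-values of the form $j_q(\cdot,\cdot) - j_{q'}(\cdot,\cdot)$. The key identity I would want is a clean formula for how $j_z(x,y)$ changes when the grounding point (third subscript) is moved; equivalently, I would express everything through the energy pairing. Indeed, the excerpt already records that $\xi(x,y,z,w) = \langle \delta_x - \delta_y, \delta_z - \delta_w\rangle_{\en}$, and the energy pairing is manifestly base-point free by its integral definition together with Lemma~\ref{lem:pairings}. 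So the cleanest route is: first verify the pointwise identity
\[
\xi_q(x,y,z,w) = \langle \delta_x - \delta_y,\, \delta_z - \delta_w\rangle_{\en}^{\Gamma},
\]
by expanding the right-hand side via \eqref{eq:en_pair} as a sum of four $j_q$-terms and matching them with Definition~\ref{def:xi}; then invoke the already-proven independence of $\langle \cdot,\cdot\rangle_{\en}$ from $q$ to conclude (a) immediately. This reduces the only hard part to a bookkeeping check that the four terms $j_q(x,z), j_q(y,w), -j_q(x,w), -j_q(y,z)$ produced by bilinearity coincide with the defining expression, which is routine given Remark~\ref{rmk:jprops}(ii).

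Alternatively, if one prefers a self-contained computation that does not route through the energy pairing, I would prove (a) by a direct potential-theoretic argument: the function $x \mapsto j_q(x,z) - j_q(x,w)$ has Laplacian $\delta_z - \delta_w$ independent of $q$, so any two choices of $q$ produce functions differing by a harmonic function on all of $\Gamma$, hence a constant; pairing against $\delta_x - \delta_y$ (total mass zero) kills the constant, giving base-point independence. This argument is appealing because it isolates precisely why the zero-mass condition on the arguments matters.

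Parts (b) and (c) then follow by inspection. For (b), swapping $(x,y)$ with $(z,w)$ in Definition~\ref{def:xi} and applying $j_q(a,b) = j_q(b,a)$ from Remark~\ref{rmk:jprops}(ii) to each of the four terms shows the two expressions are termwise equal. For (c), replacing $x$ by $y$ and $y$ by $x$ swaps the first term with the fourth and the second with the third (again up to the symmetry of $j$), each acquiring the opposite sign, so $\xi(y,x,z,w) = -\xi(x,y,z,w)$. Neither of these requires independence of the base point, so I would present them last, after (a) has licensed dropping the subscript $q$.
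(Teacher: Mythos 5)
Your main argument for (a) --- identifying $\xi_q(x,y,z,w)$ with $\langle \delta_x-\delta_y,\delta_z-\delta_w\rangle_{\en}$ by expanding \eqref{eq:en_pair} on a model whose vertex set contains $x,y,z,w$ and the base points, then invoking the base-point/generalized-inverse independence of Lemma~\ref{lem:pairings} --- is exactly the paper's proof, and your termwise verification of (b) and (c) matches the paper's ``immediate from Definition~\ref{def:xi}.'' Your alternative self-contained argument for (a) (the difference $f_q - f_{q'}$ of the functions $u \mapsto j_q(u,z)-j_q(u,w)$ is harmonic on all of $\Gamma$, hence constant, and is killed by the zero-mass measure $\delta_x-\delta_y$) is also correct and is essentially the proof outline the paper attributes to \cite[Remark B.12]{BR_Book}.
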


\begin{proof}
Parts (b) and (c) are immediate from Definition~\ref{def:xi}.

Let $G$ be a model for $\Gamma$ such that $x, y, z, w, q_1, q_2 \in V(G)$. 
Note that (see \eqref{eq:en_pair}):
\[
\xi_{q_i}(x,y , z,w) = \langle \delta_x-\delta_y, \delta_z-\delta_w \rangle_{\en} \, .
\]
Part (a) then follows from the fact that the energy pairing is independent of the choice of the base point $q_i$, cf.\ Lemma~\ref{lem:pairings}. See also \cite[Remark B.12]{BR_Book} for an outline of a different proof of part (a). 
\end{proof}
\begin{Remark} \phantomsection \label{rmk:cross_en}
\begin{itemize}
\item[]
\item[(i)] Yet another proof of Lemma~\ref{lem:CrossProp}~(a) can be obtained from Lemma~\ref{lem:GromovProd} and an explicit computation. Namely, one finds the relation:
\begin{equation}\label{eq:cd}
-2\, \xi_{q}(x,y , z,w) = r(x,z)+r(y,w) - r(x,w) - r(y,z) \, .
\end{equation}
\item[(ii)] As is evident from the proof of Lemma~\ref{lem:CrossProp}, we could define the cross ratio with the more canonical expression:
\[
\xi(x,y , z,w) = \langle \delta_x-\delta_y, \delta_z-\delta_w \rangle_{\en} \, .
\]
See also Corollary~\ref{cor:crossformula} and Remark~\ref{rmk:metric_cross} for other explicit formulas for computing cross ratios on weighted graphs and on metric graphs.
\end{itemize}
\end{Remark}

\begin{Example}[Proof of Lemma~\ref{lem:GromovProd} using cross ratios] \label{ex:GromovProd}
Let us compute the cross ratio $\xi(x,y,x,y)$ with respect to two different base points: 
\[
\begin{aligned}
\xi_x(x,y,x,y) &= j_x(x,x)+j_x(y,y) - j_x(x,y) - j_x(y,x) \\
&= j_x(y,y) = r(x,y) \, .
\end{aligned}
\] 
\[
\begin{aligned}
\xi_z(x,y,x,y) &= j_z(x,x)+j_z(y,y) - j_z(x,y) - j_z(y,x) \\
&= r(x,z)+r(y,z) - 2j_z(x,y)\, .
\end{aligned}
\] 
By Lemma~\ref{lem:CrossProp}~(a), we must have $\xi_x(x,y,x,y)=\xi_z(x,y,x,y)$ and therefore
\[j_z(x,y) = \frac{1}{2} \left(r(x,z)+r(y,z) - r(x,y)\right) \, .\]
\end{Example}

\begin{Example}[Reciprocity theorem in electrical networks] \label{ex:recip}
By Lemma~\ref{lem:CrossProp}~(a), we have $\xi_y(x,y, z, w) = \xi_w(x,y, z, w)$. Therefore
\[j_y(x,z) - j_y(x,w) = j_w(x,z) - j_w(y,z) \, . \]
This is the celebrated `reciprocity theorem' for electrical networks (see, e.g., \cite[\S 5.3]{SB1959}, \cite[\S 17.2]{circuit2011}, \cite[Theorem 4]{Tetali}, \cite[Theorem 8]{bffourier}): 
informally, the location of the current source and the resulting voltage may be interchanged without a change in voltage. See Figure~\ref{fig:reciprocity}.
\ctikzset{bipoles/length=.5cm}
\begin{figure}[h!]
\begin{tikzpicture}
\node [cloud, draw,cloud puffs=10.2,cloud puff arc=120, aspect=2, inner ysep=1em, gray] (cloud) at (0, 0) {};
	\fill[black] (-.75,.3) circle (.1);
	\fill[black] (-.75,-.3) circle (.1);
	\fill[black] (.75,.3) circle (.1);
	\fill[black] (.75,-.3) circle (.1);
	\node at (.5, .3) {\tiny{$x$}};
	\node at (.5, -.3) {\tiny{$y$}};
	\node at (-.5, .3) {\tiny{$z$}};
	\node at (-.5, -.3) {\tiny{$w$}};
	\draw (.75,-.3) to (1.8,-.3) to[dcisource] (1.8, .3) to (.75,.3);
	\node at (2.1, 0) {\tiny{$I$}};
	\draw (-1.8,.3) [o-] to (-.75,.3);
	\draw  (-1.8,-.3) [o-] to (-.75,-.3);
	\node at (-2, .3) {\tiny{$+$}};
	\node at (-2, -.3) {\tiny{$-$}};
	\node at (-2, 0) {\tiny{$V$}};
\end{tikzpicture}
$\quad \quad$
\begin{tikzpicture}
\node [cloud, draw,cloud puffs=10.2,cloud puff arc=120, aspect=2, inner ysep=1em, gray] (cloud) at (0, 0) {};
	\fill[black] (-.75,.3) circle (.1);
	\fill[black] (-.75,-.3) circle (.1);
	\fill[black] (.75,.3) circle (.1);
	\fill[black] (.75,-.3) circle (.1);
	\node at (.5, .3) {\tiny{$x$}};
	\node at (.5, -.3) {\tiny{$y$}};
	\node at (-.5, .3) {\tiny{$z$}};
	\node at (-.5, -.3) {\tiny{$w$}};
	\draw (-.75,-.3) to (-1.8,-.3) to[dcisource] (-1.8, .3) to (-.75,.3);
	\node at (-2.1, 0) {\tiny{$I$}};
	\draw (.75,.3) to (1.8,.3) [-o];
	\draw  (.75,-.3) to (1.8,-.3) [-o];
	\node at (2, .3) {\tiny{$+$}};
	\node at (2, -.3) {\tiny{$-$}};
	\node at (2, 0) {\tiny{$V$}};
\end{tikzpicture}
\caption{
Reciprocity theorem for electrical networks: 
(left) $V= \left(j_y(x,z) - j_y(x,w)\right) I$, 
(right) $V =\left(j_w(x,z) - j_w(y,z)\right) I$.
}\label{fig:reciprocity}
\end{figure}
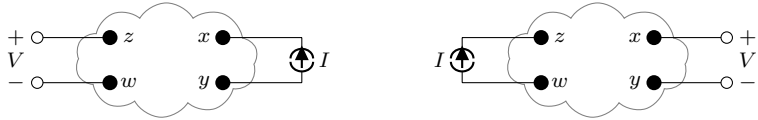
\end{Example}
\begin{Remark}\label{rmk:gen_recip}
One may think of Lemma~\ref{lem:pairings} as a vast generalization of the reciprocity theorem. 
\end{Remark}

\section{Projections} \label{sec:projs}

Throughout this section, we fix a model $G$ for a metric graph $\Gamma$, and fix an orientation $\Oc$ on $G$.
It is convenient to define the real $1$-chains by
\[C_1(G,\RR) \coloneqq \frac{\bigoplus_{e \in \mathbb{E}(G)} \RR e }{ \langle e+\bar{e} \colon e \in \Oc \rangle} \simeq \bigoplus_{e \in \Oc}  \RR e \,.\] 
So, for $e \in \mathbb{E}(G)$, the above presentation implies $\bar{e} = -e$ inside $C_1(G,\RR)$. Note that $\Oc$ is a basis for $C_1(G,\RR)$. 

\begin{Definition}\label{def:assChain}
For any subset $\MA \subseteq \EE(G)$, we define its {\em associated $1$-chain} as 
\[
\boldsymbol{\gamma}_\MA = \sum_{e \in \MA} e =  \sum_{e\in \Oc} \sign(\MA, e) \, e \, .
\]
where 
\[
\sign(\MA, e) =
\begin{cases}
+1 &\text{ if } e \in \MA \\
-1 &\text{ if } \bar{e} \in \MA \\
0  &\text{ otherwise. }
\end{cases}
\]
\end{Definition}
We will work with the usual definition of real $0$-chains:
\[C_0(G,\RR)\coloneqq \bigoplus_{v \in V(G)}  \RR v \, .\]

Let $\Oc=\{e_1,\ldots,e_m\}$ be a labeling of the orientation $\Oc$ of $G$.
The real vector space $C_1(G,\RR)$ has a canonical inner product 
\[
[ \cdot,\cdot  ] \colon C_1(G,\RR) \times C_1(G,\RR) \rightarrow \RR
\]
defined by $[ e_i,e_j ]=\delta_i(j)\ell(e_i)$.

Consider the usual boundary map
$\partial \colon C_1(G, \RR) \rightarrow C_0(G, \RR)$ defined by
$\partial(e)= e^+ - e^-$.
The first homology group coincides with the space of $1$-cycles 
\[H_1(G, \RR) = \Ker \partial \, . \] 
The inner product $[ \cdot,\cdot  ]$ restricts to an inner product, also denoted by $[ \cdot,\cdot  ]$, on $H_1(G,\RR)$. 
It is easy to check that the pair $\left(H_1(G,\RR),[ \cdot,\cdot  ]\right)$ is a canonical inner product space associated to $\Gamma$; it is independent of the choice of the model $G$.

\begin{Remark} \phantomsection \label{rmk:mat}
\begin{itemize}
\item[]
\item[(i)] The labeling $\Oc=\{e_1,\ldots,e_m\}$ fixes an isomorphism 
\[
[\,\cdot\,] \colon C_1(G,\RR) \xrightarrow{\sim} \RR^m \, .
\]
\item[(ii)] The incidence matrix $\B$ in \S\ref{sec:compatible} is precisely the matrix of $\partial$ with respect to bases $\Oc$ for $C_1(G, \RR)$, and $V(G)$ for $C_0(G, \RR)$. 
\item[(iii)] The matrix $\D$ in \S\ref{sec:compatible} is precisely the Gram matrix associated to the pair $\left(C_1(G, \RR), [\cdot, \cdot]\right)$ with respect to the basis $\Oc$.
\end{itemize}
\end{Remark}

We are interested in the two orthogonal projection maps 
\[
\pi \colon C_1(G,\RR) \twoheadrightarrow H_1(G, \RR) \, ,\] 
\[
\pi' \colon C_1(G,\RR) \twoheadrightarrow H_1(G, \RR)^{\perp}
\] 
and their matrix representations. Here $H_1(G, \RR)^{\perp}$ denotes the orthogonal complement of $H_1(G, \RR) \subseteq C_1(G, \RR)$ with respect to $[ \cdot , \cdot ]$.

\subsection{Electrical network problems} \label{sec:networkproblems}
The original motivation for computing projection matrices comes from electrical network theory. 

\subsubsection{The Kirchhoff problem} 
Consider the {\em Kirchhoff problem}: 
\begin{center}
Given $\cc \in C_1(G, \RR)$, find $\ii \in H_1(G, \RR)^{\perp}$ such that $\cc - \ii \in H_1(G, \RR)$.
\end{center}
Here $\cc$ should be thought of as an {\em external current source}, and $\ii$ should be thought of as the induced {\em internal current}. 

The condition $\cc - \ii \in H_1(G, \RR)$ is precisely the {\em Kirchhoff's current law}. The condition $\ii \in H_1(G, \RR)^{\perp}$ is precisely the {\em Kirchhoff's voltage law}. These laws are equivalent to computing the orthogonal decomposition $\cc = \ii + (\cc-\ii)$, so the solution is provided by computing 
\[\ii = \pi'(\cc)\, .\]

\begin{Remark}
The contribution of the external current source $\cc$ only depends on its boundary $\partial \cc$. It is customary to only refer to $\partial \cc$ as the external current source. See, e.g., Remark~\ref{rmk:jelectric}.
\end{Remark}

\subsubsection{Coboundaries and Ohm's law} \label{sec:Ohm}
The space of coboundaries is, by definition, 
\[{\rm Im}\left( d \colon C^0(G, \RR) \rightarrow C^1(G, \RR)\right) \, .\]
We may define an isomorphism from $C_1(G, \RR)$ to $C^1(G, \RR)$ using the bilinear form $[\cdot , \cdot]$. More precisely, we may think of $C^1(G, \RR) \simeq \bigoplus \RR e^*$ with $e^*(e)= \ell(e)$. The isomorphism is defined by  ${e}/{\ell(e)} \mapsto e^*$. Under this isomorphism $H_1(G, \RR)^\perp$ is identified with the space of coboundaries. 

Under this identification $\ii$ corresponds to a coboundary element $\vv$, referred to as {\em internal voltage}. Explicitly, the internal voltage $\vv = \sum_{e \in \Oc} v(e)\, e^*$ is identified with $\sum_{e \in \Oc} v(e)/\ell(e)\, e = \ii = \sum_{e \in \Oc} i(e)\, e$ so $v(e) = \ell(e) i(e)$. This is {\em Ohm's law}.

\subsubsection{The Dirichlet problem} \label{sec:Dirichlet}
Since $\vv$ in \S\ref{sec:Ohm} is a coboundary, $\vv = d \psi$ for some $\psi \in C^0(G, \RR)$ (well-defined up to a constant function). The $0$-cochain $\psi$ is called the {\em potential} associated to $\vv$. One might be interested to directly compute this $\psi$ given the external source $\partial \cc$. It is easy to check this problem boils down to solving the {\em Dirichlet problem}: 
\[\Delta(\psi) = \partial \cc \, .\] 
Here $\Delta$ is as in \S\ref{sec:comb_lap} and $\partial \cc$ is thought of as a discrete measure on $V(G)$, after identifying $v \in C_0(G, \RR)$ with $\delta_v \in \dmeas(G)$. The $j$-function defined in \S\ref{sec:jfunc} provides the fundamental solutions for this Dirichlet problem: if $\partial \cc = \sum_{v \in V(G)} a_v \delta_v$ then $\psi = \sum_{v \in V(G)} a_v  j_{q}(\cdot\, ,  v)$ for any fixed $q \in V(G)$.

\subsection{Projections using spanning trees} \label{sec:projST}
Before presenting the projection formulas in terms of cross ratios, we will review Kirchhoff's beautiful description of these projections (in the basis $\Oc$) as a certain average over spanning trees. This was introduced in the seminal paper \cite{Kirchhoff}. 

Recall that a spanning tree $T$ of $G$ is a maximal subset of $E(G)$ that contains no circuit (closed simple path). Equivalently, $T$ is a minimal subset of $E(G)$ that connects all vertices.

\subsubsection{Fundamental circuits and cocircuits}
Let $T$ be a spanning tree of $G$.
Each (unoriented) edge $e \not\in T$ determines a {\em fundamental circuit}, i.e. a unique circuit $\C(T, e) \subseteq E(G)$ in $T \cup e$. Let $e$ also denote the oriented edge in the fixed orientation $\Oc$ corresponding to the (unoriented) edge $e$. Note that every edge in $\C(T, e)$ comes with a preferred choice of orientation, namely the orientations that agree with the direction of the oriented edge $e\in \Oc$ as one travels along the circuit. 

\begin{Definition} \phantomsection \label{def:fundcircuits}
\begin{itemize}
\item[]
\item[(i)] For $e \not \in T$, we let $\circu(T, e)$ be the associated $1$-chain of $\C(T, e)$ endowed with its preferred orientation (Definition~\ref{def:assChain}).
For $e \in T$ we define $\circu(T, e) = 0$.
\item[(ii)]We let $\M_T$ be the $m \times m$ matrix whose columns are $[\circu(T, e)]$ for $e \in \Oc$, where $[ \cdot]$ is as in Remark~\ref{rmk:mat}~(i).
\end{itemize}
\end{Definition}
It is well-known (and easy to check) that $\{\circu(T, e) \colon e \not \in T \}$ forms a basis for $H_1(G, \RR)$.

Each (unoriented) edge $e \in T$ determines a {\em fundamental cocircuit}, i.e. the unique minimal subset $\Bc(T, e) \subseteq (E(G) \backslash T) \cup e$ such that $E(G) \backslash \Bc(T, e)$ is disconnected.
Let $e$ also denote the oriented edge in the fixed orientation $\Oc$ corresponding to the (unoriented) edge $e$. Note that every edge in $\Bc(T, e)$ comes with a preferred choice of orientation, namely the orientation that agrees with the direction of $e\in \Oc$ in the cut-set $\Bc(T, e)$. 

\begin{Definition} \phantomsection \label{def:fundcocircuits}
\begin{itemize}
\item[]
\item[(i)] For $e\in T$, we let $\cocirc(T, e)$ be the associated $1$-chain of $\Bc(T, e)$ endowed with its preferred orientation (Definition~\ref{def:assChain}).
For $e \not \in T$ we define $\circu(T, e) = 0$.
\item[(ii)] We let $\N_T$ be the $m \times m$ matrix whose columns are $[\cocirc(T, e)]$ for $e \in \Oc$, where $[ \cdot]$ is as in Remark~\ref{rmk:mat}~(i).
\end{itemize}
\end{Definition}

\subsubsection{Weights and coweights}
The {\em weight} of a spanning tree $T$ of $G$ is the product $
w(T) \coloneqq \prod_{e \not \in T}{\ell(e)}$. 
The {\em coweight} of a spanning tree $T$ of $G$ is the product $
w'(T) \coloneqq  \prod_{e \in T}{\ell^{-1}(e)}$.
The {\em weight} and {\em coweight} of $G$ are
$w(G) \coloneqq \sum_{T}{w(T)}$ and $w'(G) \coloneqq \sum_{T}{w'(T)}$,
where the sums are over all spanning trees of $G$.

\begin{Remark} \phantomsection \label{rmk:weights}
\begin{itemize}
\item[]
\item[(i)] For any spanning tree $T$ of $G$ we have ${w(T)}/{w(G)} = {w'(T)}/{w'(G)}$. Moreover, the quantity $w(G)$ depends only on the underlying metric graph $\Gamma$. This is not true of $w'(G)$. 
\item[(ii)] By  (Tutte's version of) the Kirchhoff's matrix tree theorem (\cite[Theorem VI.27]{tutte}), both $w(G)$ and $w'(G)$ can be expressed in terms of certain determinants. For example,  $w'(G) = \det(\Q_q)$, where $\Q_q$ is as defined in \S\ref{sec:computeJ}. These are simple consequences of the `Cauchy--Binet formula' for determinants. See \cite[Section 5]{abks} for more details and for a geometric (or tropical) proof.
\end{itemize}
\end{Remark}

\subsubsection{Kirchhoff's projection formulas}
Consider the following matrix averages:
\[
\P = \sum_{T}{\frac{w(T)}{w(G)} \M_T} \quad , \quad \P' = \sum_{T}{\frac{w'(T)}{w'(G)} \N_T} \, ,
\]
the sums being over all spanning trees $T$ of $G$. 
\begin{Proposition}[Kirchhoff] \phantomsection \label{prop:kirchhoffproj}
\begin{itemize}
\item[]
\item[(a)] The matrix of $\pi \colon C_1(G,\RR) \twoheadrightarrow H_1(G, \RR)$, with respect to $\Oc$, is $\P$.
\item[(b)] The matrix of $\pi' \colon C_1(G,\RR) \twoheadrightarrow H_1(G, \RR)^{\perp}$, with respect to $\Oc$, is $(\P')^{\T}$. 
\end{itemize}
\end{Proposition}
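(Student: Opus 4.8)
The plan is to deduce part (b) from part (a) by a clean pointwise identity, and then to prove (a) by first identifying $\P$ as \emph{a} projection onto $H_1(G,\RR)$ and afterwards upgrading it to the \emph{orthogonal} projection. For the reduction I would first establish, for every spanning tree $T$, the matrix identity
\[
\M_T + \N_T^{\T} = \I \, .
\]
Indeed, viewing $\M_T$ as the map $[x] \mapsto \sum_{e \notin T} [x]_e\, [\circu(T,e)]$, its kernel is the coordinate subspace spanned by the tree edges and its image is $H_1(G,\RR)$; as these are complementary, $\M_T$ is the (in general non-orthogonal) projection onto $H_1(G,\RR)$ along the tree coordinates. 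Dually, $\N_T$ is the projection onto the cut space $\Image(\B^{\T})$ along the cotree coordinates, so --- using that cycle space and cut space are orthogonal complements for the \emph{standard} inner product, and likewise for the tree and cotree coordinate subspaces --- its transpose $\N_T^{\T}$ is the projection onto the tree coordinates along $H_1(G,\RR)$. Being complementary projections, $\M_T$ and $\N_T^{\T}$ sum to $\I$. Averaging over all spanning trees and invoking the proportionality $w(T)/w(G) = w'(T)/w'(G)$ from Remark~\ref{rmk:weights}~(i), the two weightings coincide, so
\[
\P + (\P')^{\T} = \sum_T \frac{w(T)}{w(G)} \left( \M_T + \N_T^{\T} \right) = \I \, .
\]
Since $\pi + \pi' = \id$ on $C_1(G,\RR)$, part (b) follows immediately from part (a).

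For (a) I would first check that $\P$ is a projection with image $H_1(G,\RR)$. Each $\M_T$ has image inside $H_1(G,\RR)$, and it restricts to the identity there: writing a cycle in the fundamental-circuit basis $\{\circu(T,e)\}_{e\notin T}$, the coefficient on $\circu(T,e)$ is exactly the cycle's $e$-th coordinate, because each cotree edge lies in a unique fundamental circuit and with coefficient $+1$. As the scalars $w(T)/w(G)$ are positive and sum to $1$, the average $\P$ again has image in $H_1(G,\RR)$ and fixes $H_1(G,\RR)$ pointwise; hence $\P^2 = \P$ and $\Image \P = H_1(G,\RR)$.

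The main obstacle is to show that this projection is \emph{orthogonal}, i.e.\ that $\D\P$ is symmetric (equivalently, that $\P$ is self-adjoint for $[\cdot,\cdot]$). The diagonal entries are automatically symmetric, so the crux is the off-diagonal identity
\[
\sum_T w(T)\, \ell(f)\, [\circu(T,e)]_f = \sum_T w(T)\, \ell(e)\, [\circu(T,f)]_e \qquad (e \neq f) \, .
\]
On the left only trees $T$ with $e \notin T$, $f \in T$, and $f$ lying in the fundamental circuit of $e$ contribute; these are exactly the trees for which $T' \coloneqq T - f + e$ is again a spanning tree. I would set up the tree-exchange bijection $T \mapsto T'$ between the trees appearing on the two sides and verify the two facts that make the summands agree termwise. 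First, a direct weight computation gives $w(T)\,\ell(f) = w(T')\,\ell(e)$, since passing from $T$ to $T'$ adds $f$ to and removes $e$ from the cotree. Second, the graphs $T + e$ and $T' + f$ coincide and contain a single fundamental cycle, shared by both; comparing the two preferred orientations yields the sign identity $[\circu(T,e)]_f = [\circu(T',f)]_e$, both sides being the relative orientation of $e$ and $f$ along that cycle, which is manifestly symmetric in $e$ and $f$. Combining these proves the identity, hence the symmetry of $\D\P$, and therefore $\P = \pi$. The delicate point throughout is the orientation bookkeeping in the fundamental circuits; everything else reduces to manipulating positive tree weights.
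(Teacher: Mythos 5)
Your proof is correct, but it takes a genuinely different route from the paper's. The paper proves (b) first, essentially by citation: by \cite[Proposition~15.2]{Biggs} the matrix of $\pi'$ is $\D^{-1}\P'\D$, by \cite[Proposition~15.1]{Biggs} the matrix $\D^{-1}\P'$ is symmetric, whence $\D^{-1}\P'\D = (\P')^{\T}$; part (a) then follows from the identity $\I - (\P')^{\T} = \P$, also taken from \cite[\S16]{Biggs}. You argue in the opposite order and from first principles: (a) is proved directly by showing that each $\M_T$, hence the convex combination $\P$, maps into $H_1(G,\RR)$ and fixes it pointwise, and that $\D\P$ is symmetric via the exchange bijection $T \mapsto T' = T - f + e$ together with the weight identity $w(T)\ell(f) = w(T')\ell(e)$ and the sign identity $[\circu(T,e)]_f = [\circu(T',f)]_e$ (both sides being the relative orientation of $e$ and $f$ along the unique circuit of $T+e = T'+f$); then (b) follows from (a) via the termwise circuit--cocircuit duality $\M_T + \N_T^{\T} = \I$, averaged over spanning trees using $w(T)/w(G) = w'(T)/w'(G)$ from Remark~\ref{rmk:weights}~(i). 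I checked these steps and they hold; the only terse spot is the claim that $\N_T$ fixes the cut space (needed to see that $\N_T$ is a projection, hence that $\N_T^{\T}$ is the complementary projection to $\M_T$), which follows routinely by pairing a cotree-supported cut vector against the fundamental circuits. What your route buys is a self-contained combinatorial proof, in effect reproving the facts from Biggs that the paper quotes, including an explicit termwise proof of the complementarity $\P + (\P')^{\T} = \I$; what the paper's route buys is brevity, at the cost of deferring all the orientation and tree-exchange bookkeeping to the literature.
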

\begin{proof}
By \cite[Proposition~15.2]{Biggs} we know the matrix of $\pi'$ is $\D^{-1}\P' \D$. Since $\D^{-1}\P'$ is symmetric (\cite[Proposition~15.1]{Biggs}) we have 
\[\D^{-1}\P' \D = (\P')^{\T} \D^{-1} \D = (\P')^{\T}\, .\] 
This proves part (b). 
Part (a) follows from part (b) and the fact that $\I - (\P')^{\T} = \P$. See the computation in the first paragraph of \cite[\S16]{Biggs}.
\end{proof}
We note that a given graph $G$ can have many (super exponential number of) spanning trees, so computations using Proposition~\ref{prop:kirchhoffproj} are highly inefficient.

\subsection{Projections using cross ratios} \label{sec:projCR}
We now show that our projection matrices have expressions in terms of cross ratios. They are efficient for computations, and useful for proving theorems.

Let $\Xib$ be the $m \times m$ {\em matrix of cross ratios}:
\[
\Xib \coloneqq \left(\xi(e^-, e^+ , f^- , f^+) \right)_{e,f \in \Oc} \, . 
\]
It follows from Lemma~\ref{lem:pairings} and Remark~\ref{rmk:cross_en}~(ii) that 
\begin{equation} \label{eq:XiMatrix}
\Xib = \B^{\T} \L \B 
\end{equation}
for any generalized inverse $\L$ of the Laplacian matrix $\Q$.

\begin{Proposition}\label{prop:crossproj}
Let $\D$ be as in \S\ref{sec:compatible}.
\begin{itemize}
\item[(a)] The matrix of $\pi \colon C_1(G,\RR) \twoheadrightarrow H_1(G, \RR)$, with respect to $\Oc$, is 
$\I- \D^{-1} \Xib$.
\item[(b)] The matrix of $\pi' \colon C_1(G,\RR) \twoheadrightarrow H_1(G, \RR)^{\perp}$, with respect to $\Oc$, is 
$\D^{-1} \Xib$.
\end{itemize}
\end{Proposition}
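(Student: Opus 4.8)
The plan is to identify $H_1(G,\RR)^\perp$ explicitly and then show that $\D^{-1}\Xib$ is the orthogonal projection onto it. First I would recall from \eqref{eq:XiMatrix} that $\Xib = \B^{\T}\L\B$ for any generalized inverse $\L$ of $\Q=\B\D^{-1}\B^{\T}$. The key structural fact is that $H_1(G,\RR)=\Ker\partial$ has matrix $\B$ in the basis $\Oc$, so $H_1(G,\RR)=\Ker\B$. Since the inner product $[\cdot,\cdot]$ on $C_1(G,\RR)$ has Gram matrix $\D$ (Remark~\ref{rmk:mat}(iii)), a chain $\cc$ lies in $H_1(G,\RR)^\perp$ exactly when $[\cc]^{\T}\D[\hb]=0$ for all $[\hb]\in\Ker\B$, i.e.\ when $\D[\cc]\in(\Ker\B)^\perp=\Image(\B^{\T})$. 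Thus $H_1(G,\RR)^\perp$ is the image of the coboundary-type map $\D^{-1}\B^{\T}$, matching the identification of $H_1(G,\RR)^\perp$ with coboundaries in \S\ref{sec:Ohm}.

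The heart of the argument is to verify that $\Sb\coloneqq\D^{-1}\Xib=\D^{-1}\B^{\T}\L\B$ is the orthogonal projection onto $\Image(\D^{-1}\B^{\T})$ with respect to $[\cdot,\cdot]$. I would check three things. First, idempotence: $\Sb^2=\D^{-1}\B^{\T}\L\B\D^{-1}\B^{\T}\L\B=\D^{-1}\B^{\T}\L\,\Q\,\L\B=\D^{-1}\B^{\T}\L\B=\Sb$, using $\B\D^{-1}\B^{\T}=\Q$ and the generalized-inverse identity $\L\Q\L=\L$ (which holds for $\L_q$ by Remark~\ref{rmk:Lq}, and the expression is independent of the choice of $\L$ by Lemma~\ref{lem:pairings}). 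Second, the image of $\Sb$ lands in $\Image(\D^{-1}\B^{\T})$ and every element of that space is fixed by $\Sb$; the latter reduces to showing $\D^{-1}\B^{\T}\L\B\D^{-1}\B^{\T}=\D^{-1}\B^{\T}$, i.e.\ $\B^{\T}\L\Q=\B^{\T}$, which follows because $\L$ is a generalized inverse and $\B^{\T}\L\B\D^{-1}\B^{\T}=\B^{\T}\L\Q=\B^{\T}$ on the relevant subspace (here I would use that $\Q\L\Q=\Q$ and that $\B^{\T}$ annihilates $\Ker\Q=\langle\text{constants}\rangle$). Third, self-adjointness with respect to $[\cdot,\cdot]$: a map with matrix $M$ is self-adjoint iff $\D M$ is symmetric, and $\D\Sb=\B^{\T}\L\B$ is manifestly symmetric since $\L$ can be taken symmetric (e.g.\ $\L=\Q^+$, or any $\L_q$ is symmetric). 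Idempotent plus self-adjoint plus correct image gives that $\Sb$ is the orthogonal projection onto $H_1(G,\RR)^\perp$, proving part (b).

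Part (a) then follows immediately: the orthogonal projection onto $H_1(G,\RR)$ is $\I$ minus the projection onto its complement, so its matrix is $\I-\D^{-1}\Xib$. I would present this as a one-line consequence of part (b).

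The main obstacle I anticipate is the careful bookkeeping around the generalized inverse $\L$. The identities $\L\Q\L=\L$ and $\B^{\T}\L\Q=\B^{\T}$ are not automatic for an arbitrary generalized inverse satisfying only $\Q\L\Q=\Q$; one must either restrict to a specific $\L$ (such as $\L_q$ or $\Q^+$, for which these hold) and then invoke the independence of $\Xib=\B^{\T}\L\B$ from the choice of $\L$ (Lemma~\ref{lem:pairings} and \eqref{eq:XiMatrix}), or argue everything on the subspace $H=\Image(\Q)$ where $\Q$ is invertible. Reconciling the weak hypothesis $\Q\L\Q=\Q$ with the stronger relations needed for the projection identities is the one place where care is required; everything else is formal manipulation using $\Q=\B\D^{-1}\B^{\T}$ and the Gram matrix $\D$.
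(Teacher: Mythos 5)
Your proof is correct, and it takes a genuinely different route from the paper's, even though both start from the same two ingredients: the identification $H_1(G,\RR)^{\perp} = \Image \D^{-1}\B^{\T}$ and the formula $\Xib = \B^{\T}\L\B$ of \eqref{eq:XiMatrix}. You verify that the candidate matrix $\Sb = \D^{-1}\Xib$ satisfies the three properties characterizing an orthogonal projector: idempotence, self-adjointness for the $\D$-inner product, and having the correct image with the correct fixed vectors. The paper instead computes the projection of an arbitrary vector directly: writing $\hat{\bf}=\D^{-1}\B^{\T}\xb$ and imposing the decomposition condition $\bf-\hat{\bf}\in\Ker\B$ yields the Dirichlet problem $\Q\xb=\B\bf$, which is solved by $\xb=\L_q\B\bf$, giving $\hat{\bf}=\D^{-1}\Xib\,\bf$ in one stroke. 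The paper's route is shorter because the defining condition of the orthogonal decomposition replaces all three projector axioms, and because working with the specific inverse $\L_q$ (and the already-developed machinery of \S\ref{sec:Dirichlet}) sidesteps your generalized-inverse bookkeeping entirely; your route, in exchange, makes explicit that $\D^{-1}\Xib$ is idempotent and self-adjoint -- facts the paper uses later (e.g.\ Remark~\ref{rmk:Thomson}) without restating them -- and pins down exactly which identities are needed. On your one flagged concern: no restriction to $\L_q$ or $\Q^{+}$ is actually necessary for the algebra, since the single identity $\B^{\T}\L\Q=\B^{\T}$ holds for \emph{every} generalized inverse by the argument you sketch parenthetically ($\Q(\L\Q-\I)=\mathbf{0}$ forces the columns of $\L\Q-\I$ to lie in $\Ker\Q$, the constant vectors, which $\B^{\T}$ annihilates), and this identity alone yields both idempotence, via $\Sb^2=\D^{-1}(\B^{\T}\L\Q)\L\B=\Sb$ with no need for $\L\Q\L=\L$, and the fixed-point property; only the symmetry of $\Xib$ requires either a symmetric choice of $\L$ together with the independence statement of Lemma~\ref{lem:pairings}, or simply Lemma~\ref{lem:CrossProp}~(b).
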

\begin{Remark} \label{rmk:efficient}
One can compute $\Xib$ and both these projection matrices in time at most $O(n^\omega)$, where $\omega$ is the exponent for the matrix multiplication algorithm (currently $\omega < 2.38$). See Remark~\ref{rmk:Lq}~(iii).
\end{Remark}
\begin{proof}
It suffices to prove (b), which follows from a straightforward linear algebraic argument. We identify $C_1(G, \RR)$ with $\RR^m$ using $\Oc$. Recall $H_1(G, \RR) = \Ker \partial = \Ker \B$. Therefore (see Remark~\ref{rmk:mat}) we have $H_1(G, \RR)^{\perp} = {\rm Im}\, \D^{-1}\B^{\T}$. 

For $\bf \in \RR^m$, let $\hat{\bf} = \D^{-1}\B^{\T} {\xb}$ denote its orthogonal projection onto ${\rm Im}\, \D^{-1}\B^{\T}$. From $\bf - \hat{\bf} \in \Ker \B$ we obtain the Dirichlet problem $\Q \xb = \B \bf$ which has $\xb = \L_q \B \bf$ as a solution (see \S\ref{sec:Dirichlet}). Therefore, by \eqref{eq:XiMatrix}, we have 
\[\hat{\bf} = \D^{-1}\B^{\T} \L_q \B \bf = \D^{-1} \Xib\, \bf \, .\]
\end{proof}

The following is a restatement of Proposition~\ref{prop:crossproj} in a more canonical language.
\begin{Corollary} \label{cor:Fs}
For any $f \in \Oc$ we have
\begin{itemize}
\item[(a)] $\pi(f) = \sum_{e \in \Oc} {\F(e,f) e}$, where 
\[
\F(e,f) \coloneqq
\begin{cases}
 1-{r(e^-, e^+)}/{\ell(e)}  &\text{ if } e=f\\
-{\xi(e^-, e^+ ,f^-, f^+)}/{\ell(e)}  &\text{ if } e \ne f \, .
\end{cases} 
\]
\item[(b)] $\pi'(f) = \sum_{e \in \Oc} {\F'(e,f) e}$, where 
\[
\F'(e,f) =
{\xi(e^-, e^+ ,f^-, f^+)}/{\ell(e)} \, .
\]
\end{itemize}
\end{Corollary}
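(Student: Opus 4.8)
The plan is to deduce Corollary~\ref{cor:Fs} as a direct transcription of Proposition~\ref{prop:crossproj} from matrix language into coordinate-free language. Since Proposition~\ref{prop:crossproj} already identifies the matrices of $\pi$ and $\pi'$ with respect to the basis $\Oc$, the corollary is essentially a bookkeeping statement: I need to read off the $(e,f)$-entry of each matrix and interpret it as the coefficient of $e$ in the expansion of $\pi(f)$ (respectively $\pi'(f)$) in the basis $\Oc$.

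First I would treat part (b), which is the cleaner of the two. The matrix of $\pi'$ is $\D^{-1}\Xib$ by Proposition~\ref{prop:crossproj}(b). Writing out the matrix product, the $(e,f)$-entry of $\D^{-1}\Xib$ is $\ell(e)^{-1}$ times the $(e,f)$-entry of $\Xib$, and by definition of $\Xib$ that entry is $\xi(e^-,e^+,f^-,f^+)$. Since the column of a matrix indexed by $f$, read against the basis $\Oc$, records exactly the coefficients in $\pi'(f)=\sum_{e\in\Oc}\F'(e,f)\,e$, I immediately obtain $\F'(e,f)=\xi(e^-,e^+,f^-,f^+)/\ell(e)$, as claimed.

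Next I would handle part (a) using $\pi=\id-\pi'$, which holds because $\pi$ and $\pi'$ are the orthogonal projections onto $H_1(G,\RR)$ and its orthogonal complement. In matrix form this is $\I-\D^{-1}\Xib$, matching Proposition~\ref{prop:crossproj}(a). For the off-diagonal entries $e\ne f$ the identity matrix contributes nothing, so $\F(e,f)=-\F'(e,f)=-\xi(e^-,e^+,f^-,f^+)/\ell(e)$. For the diagonal entries $e=f$ I would use the evaluation $r(e^-,e^+)=\xi(e^-,e^+,e^-,e^+)$ recorded in the introduction (the $r$-function as an evaluation of the cross ratio), so the diagonal entry of $\D^{-1}\Xib$ equals $r(e^-,e^+)/\ell(e)$, and subtracting from $1$ gives $\F(e,f)=1-r(e^-,e^+)/\ell(e)$.

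I expect no genuine obstacle here, since every ingredient is already in place: the content is entirely in Proposition~\ref{prop:crossproj}, and the remaining work is unwinding the convention that columns of a representing matrix give coefficients in the target basis. The one point requiring a moment of care is the diagonal case of part~(a), where I must correctly identify the diagonal of $\Xib$ with the effective resistance via $\xi(e^-,e^+,e^-,e^+)=r(e^-,e^+)$; this is the only place where an evaluation formula rather than a pure restatement is used. I would therefore state the proof as: this is a restatement of Proposition~\ref{prop:crossproj}, reading the matrices $\D^{-1}\Xib$ and $\I-\D^{-1}\Xib$ entrywise and using $r(e^-,e^+)=\xi(e^-,e^+,e^-,e^+)$ on the diagonal.
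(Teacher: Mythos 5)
Your proposal is correct and matches the paper exactly: the paper presents Corollary~\ref{cor:Fs} with no separate argument, introducing it as a restatement of Proposition~\ref{prop:crossproj} in more canonical language, which is precisely the entry-by-entry unwinding you carry out. Your one point of added care --- identifying the diagonal of $\Xib$ with $r(e^-,e^+)$ via the evaluation $r(x,y)=\xi(x,y,x,y)$ --- is the same implicit step the paper relies on (cf.\ Example~\ref{ex:GromovProd} and Remark~\ref{rmk:cross_en}), so there is no gap.
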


\subsection{Relations and consequences}  \label{sec:2projs}
The two different descriptions of projection matrices (Proposition~\ref{prop:kirchhoffproj} and Proposition~\ref{prop:crossproj}) have some important consequences.
\begin{itemize} 
\item[(i)] We have equalities 
\begin{equation} \label{eq:Ps_Cross}
\P= \I- \D^{-1} \Xib \quad , \quad \P' = \Xib \D^{-1}\, .
\end{equation}
\item[(ii)] Using \eqref{eq:XiMatrix} and \eqref{eq:Ps_Cross} we obtain 
\[
\P' = \B^{\T} \L \B  \D^{-1}
\]
This refines (and generalizes) the `canonical factorization' of Biggs in \cite[\S8 and \S15]{Biggs}.
\item[(iii)] The {\em Foster coefficient} of $e \in \Oc$ is, by definition,
\[\F(e) \coloneqq \F(e,e) = 1-{r(e^-, e^+)}/{\ell(e)}\, .\] 
Clearly $\F(e) = \F(\bar{e})$, so the Foster coefficient is also defined for $ e \in E(G)$. It measures the probability ${\rm Pr}\{e \not\in {\rm T}\}$, where ${\rm T}$ is a weighted uniform spanning tree. 
It is easy to see that $\sum_{e \in E(G)}\F(e) = \dim_{\RR} H_1(G, \RR)$. In fact, both sides of this equality represent the trace of the orthogonal projection matrix $\P$. This is the theorem of Ronald Foster in \cite{Foster}. See also \cite{Flanders}, \cite[Theorem 6]{Tetali}, \cite[Corollary 6.5]{bf}.

\item[(iv)] We have 
\[\xi_{f^-}(e^-, e^+ ,f^-, f^+) = j_{f^-}(e^+, f^+) - j_{f^-}(e^-, f^+)\, .\]
Therefore, $\F'(e,f)$ can be interpreted as the current that flows across $e$ when a unit current is imposed between the endpoints of $f$. In this way, we recover the well-known description as a `transfer--current matrix' for $\pi'$ in probability theory (see, e.g., \cite{BP}, \cite[\S2.4, \S4.2]{LP}, \cite[\S4.3.2]{HKPYV}, \cite[\S4]{BLPS}). The `transfer--current theorem' states that the weighted uniform spanning tree of $G$ is a determinantal point process on $E(G)$ with kernel $\pi'$. 

\end{itemize}

\subsection{Energy pairing and cross ratios using projections}
We have already seen the entries of projection matrices are computed from certain cross ratios. We now show that an arbitrary cross ratio can be computed using these projection matrices.

A {\em path} $\gamma$ in $G$ is an alternating sequence of vertices $v_i$ and oriented edges $e_i$,
\[
v_0, e_0, v_1, e_1, v_2, \ldots , v_{k-1}, e_{k-1}, v_k
\]
such that $e_i^- = v_i$ and $e_i^+ = v_{i+1}$. A {\em closed path} is one that starts and ends at the same vertex. One can associate a $1$-chain $\boldsymbol{\gamma}$ to the path $\gamma$ by applying Definition~\ref{def:assChain} to the set of oriented edges $\{e_0, \ldots, e_{k-1}\}$. By construction, we have $\partial \boldsymbol{\gamma} = \delta_{v_k} - \delta_{v_0}$.

More generally, for any $\nu \in \dmeasz(G)$, it is easy to see there exists $\boldsymbol{\gamma} \in C_1(G, \RR)$, well-defined up to an element of $H_1(G, \RR)$, such that $\partial (\boldsymbol{\gamma}) = \nu$. To see this, let 
$\nu = \sum_{v \in V(G)} a_v \delta_v$. Then for any fixed $q \in V(G)$ we have 
$\nu = \sum_{v \in V(G)} a_v (\delta_v- \delta_q)$. Let $\boldsymbol{\gamma}_{qv}$ be an arbitrary path in $G$ from $q$ to $v$. Then $\boldsymbol{\gamma} = \sum_{v \in V(G)} a_v \boldsymbol{\gamma}_{qv}$ has the property that $\partial (\boldsymbol{\gamma}) = \nu$.

\begin{Proposition} \label{prop:energyformula}
Let $\nu_1, \nu_2 \in \dmeasz(\Gamma)$. Fix a model $G$ compatible with $\nu_1, \nu_2$ and let $\boldsymbol{\gamma}_1 , \boldsymbol{\gamma}_2 \in C_1(G, \RR)$ be such that $\partial (\boldsymbol{\gamma}_i) = \nu_i$. Then
\[
\langle \nu_1, \nu_2 \rangle_{\en} = [\boldsymbol{\gamma}_1 , \pi'(\boldsymbol{\gamma}_2)] \, .
\]
Here $\pi' \colon C_1(G,\RR) \twoheadrightarrow H_1(G, \RR)^{\perp}$ denotes the orthogonal projection as before. 
\end{Proposition}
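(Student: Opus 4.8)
The plan is to reduce everything to a single matrix identity, assembling the ingredients already recorded in the excerpt. Identify $C_1(G,\RR)$ with $\RR^m$ via the labeling $\Oc$ using $[\,\cdot\,]$ as in Remark~\ref{rmk:mat}~(i). By Remark~\ref{rmk:mat}~(ii) the boundary map $\partial$ is represented by the incidence matrix $\B$, so the hypothesis $\partial(\boldsymbol{\gamma}_i) = \nu_i$ translates into $\B[\boldsymbol{\gamma}_i] = [\nu_i]$. By Remark~\ref{rmk:mat}~(iii) the Gram matrix of the inner product $[\,\cdot\,,\cdot\,]$ is $\D$, and by Proposition~\ref{prop:crossproj}~(b) the projection $\pi'$ is represented by $\D^{-1}\Xib$. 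These four translations are exactly what is needed.

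First I would expand the right-hand side in coordinates:
\[
[\boldsymbol{\gamma}_1 , \pi'(\boldsymbol{\gamma}_2)] = [\boldsymbol{\gamma}_1]^{\T} \D\, (\D^{-1}\Xib\,[\boldsymbol{\gamma}_2]) = [\boldsymbol{\gamma}_1]^{\T}\, \Xib\, [\boldsymbol{\gamma}_2]\,,
\]
the $\D$ and $\D^{-1}$ cancelling. Next I would invoke \eqref{eq:XiMatrix}, namely $\Xib = \B^{\T}\L\B$ for any generalized inverse $\L$ of $\Q$, to rewrite this as $[\boldsymbol{\gamma}_1]^{\T}\B^{\T}\L\B[\boldsymbol{\gamma}_2]$. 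Finally, substituting $\B[\boldsymbol{\gamma}_i]=[\nu_i]$ yields $[\nu_1]^{\T}\L[\nu_2]$, which is precisely $\langle\nu_1,\nu_2\rangle_{\en}$ by Lemma~\ref{lem:pairings} (equivalently by \eqref{eq:en_pair} with $\L = \L_q$). This chain of equalities is the whole proof.

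There is no genuine technical obstacle here; the content is organizational rather than difficult, and the only point worth a remark is consistency with the ambiguity in the choice of $\boldsymbol{\gamma}_i$. Since $\boldsymbol{\gamma}_i$ is determined only up to $H_1(G,\RR) = \Ker\partial$, one should check that the stated quantity is well defined: altering $\boldsymbol{\gamma}_2$ by a cycle leaves $\pi'(\boldsymbol{\gamma}_2)$ unchanged because $\pi'$ annihilates $H_1(G,\RR)$, while altering $\boldsymbol{\gamma}_1$ by a cycle $h$ changes the pairing by $[h,\pi'(\boldsymbol{\gamma}_2)]=0$ since $\pi'(\boldsymbol{\gamma}_2)\in H_1(G,\RR)^{\perp}$. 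This is already implicit in the computation above, but it is reassuring to see it directly.

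Finally, I would record the conceptual reading that makes the identity transparent. Because $\pi'$ is an orthogonal, hence self-adjoint, projection, we have $[\boldsymbol{\gamma}_1,\pi'(\boldsymbol{\gamma}_2)] = [\pi'(\boldsymbol{\gamma}_1),\pi'(\boldsymbol{\gamma}_2)]$, the inner product of the two internal currents $\pi'(\boldsymbol{\gamma}_i)$ solving the Kirchhoff problem for the sources $\nu_i$ (\S\ref{sec:networkproblems}). The $\D$-weighted inner product of these currents is exactly the dissipated energy, matching the electrical interpretation of $\langle\cdot,\cdot\rangle_{\en}$, so the algebraic identity and the physical picture coincide.
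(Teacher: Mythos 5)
Your proof is correct and is essentially identical to the paper's: the same chain of equalities $[\boldsymbol{\gamma}_1,\pi'(\boldsymbol{\gamma}_2)] = [\boldsymbol{\gamma}_1]^{\T}\D\,\D^{-1}\Xib\,[\boldsymbol{\gamma}_2] = [\boldsymbol{\gamma}_1]^{\T}\B^{\T}\L\B[\boldsymbol{\gamma}_2] = [\nu_1]^{\T}\L[\nu_2] = \langle\nu_1,\nu_2\rangle_{\en}$, citing Remark~\ref{rmk:mat}, Proposition~\ref{prop:crossproj}, \eqref{eq:XiMatrix}, and Lemma~\ref{lem:pairings} in the same order. Your added remarks on well-definedness up to $H_1(G,\RR)$ and the self-adjointness interpretation are sound but supplementary.
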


\begin{proof}
We use the basis $\Oc$ and do the computations with the help of corresponding matrices:
\[
\begin{aligned}
[\boldsymbol{\gamma}_{1}, \pi'(\boldsymbol{\gamma}_{2})] &= [\boldsymbol{\gamma}_{1}]^{\T} \D [\pi'(\boldsymbol{\gamma}_{2})] &\text{(Remark~\ref{rmk:mat}~(iii))}\\
&= [\boldsymbol{\gamma}_{1}]^{\T} \D \D^{-1}\Xib[\boldsymbol{\gamma}_{2}] &\text{(Proposition~\ref{prop:crossproj})}\\
&=[\boldsymbol{\gamma}_{1}]^{\T} \B^{\T} \L \B [\boldsymbol{\gamma}_{2}] &\text{\eqref{eq:XiMatrix}}\\
&=[\partial \boldsymbol{\gamma}_{1}]^{\T} \L [\partial \boldsymbol{\gamma}_{2}]&\text{(Remark~\ref{rmk:mat}~(ii))}\\
&=[\nu_1]^{\T} \L [\nu_2] \\
&=\langle \nu_1 , \nu_2\rangle_{\en}&\text{(Lemma~\ref{lem:pairings})}\, .
\end{aligned}
\]
\end{proof}

\begin{Corollary} \label{cor:crossformula}
Fix arbitrary paths in $G$ from $y$ to $x$, and from $w$ to $z$. Let $\boldsymbol{\gamma}_{yx}$ and $\boldsymbol{\gamma}_{wz}$ denote their associated $1$-chains. Then
\[
\xi(x,y, z, w) = [\boldsymbol{\gamma}_{yx}, \pi'(\boldsymbol{\gamma}_{wz})]
\]
\end{Corollary}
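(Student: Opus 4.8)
The plan is to derive Corollary~\ref{cor:crossformula} as an immediate specialization of Proposition~\ref{prop:energyformula}. Recall from Remark~\ref{rmk:cross_en}~(ii) the identity $\xi(x,y,z,w) = \langle \delta_x - \delta_y, \delta_z - \delta_w \rangle_{\en}$. So the strategy is to set $\nu_1 = \delta_x - \delta_y$ and $\nu_2 = \delta_z - \delta_w$, both of which lie in $\dmeasz(G)$ once $G$ is a model with $x,y,z,w \in V(G)$, and then to apply the proposition with suitable choices of $1$-chains $\boldsymbol{\gamma}_1, \boldsymbol{\gamma}_2$ realizing these boundaries.

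First I would fix a model $G$ compatible with the four points, and observe that a path $\gamma_{yx}$ from $y$ to $x$ has associated $1$-chain satisfying $\partial \boldsymbol{\gamma}_{yx} = \delta_x - \delta_y = \nu_1$, by the boundary computation recorded just before Proposition~\ref{prop:energyformula} (a path from $v_0$ to $v_k$ gives $\partial \boldsymbol{\gamma} = \delta_{v_k} - \delta_{v_0}$). Likewise $\partial \boldsymbol{\gamma}_{wz} = \delta_z - \delta_w = \nu_2$. Thus $\boldsymbol{\gamma}_{yx}$ and $\boldsymbol{\gamma}_{wz}$ are valid choices of $\boldsymbol{\gamma}_1$ and $\boldsymbol{\gamma}_2$ in the proposition.

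Then I would simply chain the two identities together:
\[
\xi(x,y,z,w) = \langle \delta_x - \delta_y, \delta_z - \delta_w \rangle_{\en} = [\boldsymbol{\gamma}_{yx}, \pi'(\boldsymbol{\gamma}_{wz})]\,,
\]
where the first equality is Remark~\ref{rmk:cross_en}~(ii) and the second is Proposition~\ref{prop:energyformula} applied to $\nu_1 = \delta_x - \delta_y$ and $\nu_2 = \delta_z - \delta_w$. This completes the proof.

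There is essentially no obstacle here, since all the substance is already contained in Proposition~\ref{prop:energyformula}; the only point requiring a moment's care is the bookkeeping of orientations in the boundary formula, i.e.\ confirming that a path \emph{from $y$ to $x$} yields boundary $\delta_x - \delta_y$ (and not its negative) so that the indices in $\xi(x,y,z,w)$ match up correctly. Since $\xi$ and the energy pairing are both independent of the representative $1$-chains chosen (the pairing depends only on $\partial \boldsymbol{\gamma}_i = \nu_i$, by Proposition~\ref{prop:energyformula}), the phrase ``arbitrary paths'' in the statement is justified, and no further argument is needed.
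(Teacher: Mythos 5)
Your proof is correct and is essentially identical to the paper's own proof: both specialize Proposition~\ref{prop:energyformula} to $\nu_1 = \delta_x - \delta_y$, $\nu_2 = \delta_z - \delta_w$ and invoke Remark~\ref{rmk:cross_en}~(ii). Your extra check that a path from $y$ to $x$ has boundary $\delta_x - \delta_y$ (so the signs match) is the right detail to verify, and the paper leaves it implicit.
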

\begin{proof}
This follows from Proposition~\ref{prop:energyformula}, applied to $\nu_1 = \delta_x - \delta_y$ and $\nu_2 = \delta_z - \delta_w$. See Remark~\ref{rmk:cross_en}~(ii).
\end{proof}

An explicit integral formula for $j$-functions is given in \cite[Proposition 3.17]{sw}. The following is a discrete version of that result.
\begin{Corollary} \label{cor:jformula}
Fix arbitrary paths from $z$ to $x$, and from $z$ to $y$. Let $\boldsymbol{\gamma}_{zx}$ and $\boldsymbol{\gamma}_{zy}$ denote their associated $1$-chains. Then
\[
j_z(x,y) = [\boldsymbol{\gamma}_{zx}, \pi'(\boldsymbol{\gamma}_{zy})] \, .
\]
\end{Corollary}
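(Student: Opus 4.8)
The plan is to derive Corollary~\ref{cor:jformula} directly from Corollary~\ref{cor:crossformula}, which in turn rests on Proposition~\ref{prop:energyformula}. The key observation is that the $j$-function is a special evaluation of the cross ratio function. Recall from the introduction the identity $j_z(x,y) = \xi(x,z,y,z)$. So first I would unwind this: applying the defining formula for $\xi$ (Definition~\ref{def:xi}) gives $\xi(x,z,y,z) = \langle \delta_x - \delta_z, \delta_y - \delta_z\rangle_{\en}$ by Remark~\ref{rmk:cross_en}~(ii).

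Next I would invoke Proposition~\ref{prop:energyformula} with the specific choices $\nu_1 = \delta_x - \delta_z$ and $\nu_2 = \delta_y - \delta_z$. A path from $z$ to $x$ has associated $1$-chain $\boldsymbol{\gamma}_{zx}$ with boundary $\partial \boldsymbol{\gamma}_{zx} = \delta_x - \delta_z = \nu_1$, using the convention $\partial(e) = e^+ - e^-$ together with the construction of $\boldsymbol{\gamma}$ from a path in \S\ref{sec:2projs}. Likewise a path from $z$ to $y$ yields $\boldsymbol{\gamma}_{zy}$ with $\partial \boldsymbol{\gamma}_{zy} = \delta_y - \delta_z = \nu_2$. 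Since both chains are chosen to have exactly the prescribed boundaries, the hypotheses of Proposition~\ref{prop:energyformula} are met, and it gives
\[
j_z(x,y) = \langle \delta_x - \delta_z, \delta_y - \delta_z \rangle_{\en} = [\boldsymbol{\gamma}_{zx}, \pi'(\boldsymbol{\gamma}_{zy})] \, .
\]

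This is essentially the entire argument; there is no substantive obstacle, since everything reduces to correctly matching the boundaries. The only point requiring a moment of care is the bookkeeping on orientations and signs: I must confirm that the phrase ``path from $z$ to $x$'' produces boundary $\delta_x - \delta_z$ and not $\delta_z - \delta_x$, which follows from the convention that a path runs $v_0, e_0, \ldots, e_{k-1}, v_k$ with $\partial \boldsymbol{\gamma} = \delta_{v_k} - \delta_{v_0}$, so starting at $z$ and ending at $x$ indeed gives $\delta_x - \delta_z$. One could alternatively present this as the specialization $z \mapsto w$ and $y \mapsto z$ (renaming) of Corollary~\ref{cor:crossformula}, but invoking Proposition~\ref{prop:energyformula} directly is cleaner since it avoids juggling four formal variables. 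The independence of the result from the choice of paths is automatic, since $\pi'$ annihilates $H_1(G,\RR)$ and the chains $\boldsymbol{\gamma}_{zx}, \boldsymbol{\gamma}_{zy}$ are each well-defined modulo $H_1(G,\RR)$.
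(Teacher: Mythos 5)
Your proof is correct and takes essentially the same route as the paper: the paper deduces Corollary~\ref{cor:jformula} from Corollary~\ref{cor:crossformula} via the identity $j_z(x,y)=\xi(x,z,y,z)$, and since Corollary~\ref{cor:crossformula} is itself just Proposition~\ref{prop:energyformula} applied to $\nu_1=\delta_x-\delta_y$, $\nu_2=\delta_z-\delta_w$, your direct appeal to Proposition~\ref{prop:energyformula} with $\nu_1=\delta_x-\delta_z$, $\nu_2=\delta_y-\delta_z$ is the same argument with one intermediate citation inlined. Your orientation check that a path from $z$ to $x$ has $\partial\boldsymbol{\gamma}_{zx}=\delta_x-\delta_z$ is also correct.
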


\begin{proof}
This follows from Corollary~\ref{cor:crossformula}, applied to  $\xi(x,z, y, z) = j_z(x,y)$.
\end{proof}
\begin{Corollary} \label{cor:rformula}
Fix two arbitrary paths from $y$ to $x$. Let $\boldsymbol{\gamma}_{yx}$ and $\boldsymbol{\gamma}'_{yx}$ denote the associated $1$-chains. Then
\[
r(x,y) = [\boldsymbol{\gamma}'_{yx}, \pi'(\boldsymbol{\gamma}_{yx})] \, .
\]
\end{Corollary}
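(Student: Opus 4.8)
The plan is to specialize Corollary~\ref{cor:crossformula} to the diagonal case of the cross ratio, using the identification of the effective resistance as a self-evaluation of the cross ratio function. Recall from the discussion following Definition~\ref{def:xi} (and from Example~\ref{ex:GromovProd}) that
\[
r(x,y) = \xi(x,y,x,y)\, .
\]
So the entire content of the corollary reduces to reading off the right-hand side of Corollary~\ref{cor:crossformula} when the four points $(x,y,z,w)$ are taken to be $(x,y,x,y)$.

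Concretely, I would apply Corollary~\ref{cor:crossformula} with $z = x$ and $w = y$. In that corollary the first chain $\boldsymbol{\gamma}_{yx}$ is an arbitrary path from $y$ to $x$, and the second chain $\boldsymbol{\gamma}_{wz}$ is an arbitrary path from $w = y$ to $z = x$, i.e.\ a second (possibly different) path from $y$ to $x$; this second path is exactly what we have renamed $\boldsymbol{\gamma}'_{yx}$ in the present statement. The corollary then gives directly
\[
r(x,y) = \xi(x,y,x,y) = [\boldsymbol{\gamma}_{yx}, \pi'(\boldsymbol{\gamma}'_{yx})]\, .
\]

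It remains only to observe that this matches the asserted formula $r(x,y) = [\boldsymbol{\gamma}'_{yx}, \pi'(\boldsymbol{\gamma}_{yx})]$, with the two paths swapped. This is immediate from the symmetry of the inner product $[\cdot,\cdot]$ together with the fact that the orthogonal projection $\pi'$ is self-adjoint with respect to $[\cdot,\cdot]$, so that
\[
[\boldsymbol{\gamma}'_{yx}, \pi'(\boldsymbol{\gamma}_{yx})] = [\pi'(\boldsymbol{\gamma}'_{yx}), \boldsymbol{\gamma}_{yx}] = [\boldsymbol{\gamma}_{yx}, \pi'(\boldsymbol{\gamma}'_{yx})]\, .
\]
Since the two paths from $y$ to $x$ play symmetric roles, there is no genuine obstacle here: the proof is a one-line specialization, and the only point requiring a word of care is confirming that the roles of the two paths are interchangeable, which follows from self-adjointness of $\pi'$. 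I expect the write-up to be a single sentence citing Corollary~\ref{cor:crossformula} and the identity $r(x,y) = \xi(x,y,x,y)$, exactly parallel to the proofs of Corollaries~\ref{cor:jformula} and the energy formula above it.
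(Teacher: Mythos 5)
Your proposal is correct and is essentially the paper's own proof: specialize Corollary~\ref{cor:crossformula} to the $4$-tuple $(x,y,x,y)$ and use $r(x,y)=\xi(x,y,x,y)$. The final step invoking self-adjointness of $\pi'$ is harmless but unnecessary, since both paths are arbitrary paths from $y$ to $x$ and you may simply let $\boldsymbol{\gamma}'_{yx}$ play the role of the first chain and $\boldsymbol{\gamma}_{yx}$ the second when applying Corollary~\ref{cor:crossformula}.
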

\begin{proof}
This follows from Corollary~\ref{cor:crossformula}, applied to  $\xi(x,y, x, y) = r(x,y)$.
\end{proof}
\begin{Remark}\label{rmk:Thomson}
Since $\pi'^2 = \pi'$ and $\pi'$ is self-adjoint with respect to $[ \cdot , \cdot]$, one can write the expression in Corollary~\ref{cor:rformula} as $r(x,y) = [\pi'(\boldsymbol{\gamma}_{yx}), \pi'(\boldsymbol{\gamma}_{yx})]$. So $r(x,y)$ is the {\em norm squared} of the projected vector $\pi'(\boldsymbol{\gamma}_{yx})$. This is equivalent to {\em Thomson's principle} for electrical networks (see \cite[\S18]{Biggs}). Proposition~\ref{prop:energyformula} may be thought of as a generalized version of Thomson's principle.
\end{Remark}

\begin{Remark} \label{rmk:metric_cross}
Corollary~\ref{cor:crossformula} can easily be proved assuming Corollary~\ref{cor:jformula} (using Definition~\ref{def:xi}). Similarly, one can use the explicit integral formula for $j$-functions in \cite[Proposition 3.17]{sw} to write down an explicit integral formula for cross ratios on metric graphs. Namely, for all $x,y,z,w \in \Gamma$, we have 
	\begin{equation}\label{eq:jformula}
	\xi(x,y,z,w)=\int_{\gamma_{yx}}\left(\omega_{\boldsymbol{\gamma}_{wz}}-\pi(\omega_{\boldsymbol{\gamma}_{wz}})\right) \, .
	\end{equation}
Here $\pi \colon \Omega^1(\Gamma) \rightarrow \mathcal{H}(\Gamma)$ denotes the orthogonal projection from the space of piecewise constant $1$-forms to the subspace of harmonic $1$-forms on $\Gamma$. For $p,q \in \Gamma$, the $1$-form $\omega_{\boldsymbol{\gamma}_{pq}}$ is associated to a piecewise linear path from $p$ to $q$. We refer to \cite[\S3]{sw} for more details.

\end{Remark}

\section{Rayleigh's laws} \label{sec:Rayleigh}
\subsection{Contractions} \label{sec:contractions}
Let $\Gamma$ be a metric graph. Let $e \subseteq \Gamma$ be an edge segment with boundary points $\partial e = \{e^-, e^+\}$. We denote by $\Gamma / e$ the quotient metric graph whose equivalence
classes are $e$ and all one point subsets $\{x\}$ for $x \notin e$. Geometrically, one is contracting (collapsing) $e$ to a single point $p_e$.
From the point of view of electrical networks, it is best to think of setting $\ell(e)=0$, which can be interpreted as `short-circuiting' the segment $e$. 

Let $G$ be a model of $\Gamma$ so that $e \in E(G)$. Possibly upon making a refinement of the vertex set underlying $G$ we can take $V/\{e^-,e^+\}$  as a vertex set of $\Gamma/e$, yielding a model $G/e$ of $\Gamma/e$ with the property that $E(G/e)$ is canonically identified with $E(G) \backslash e$. We consider $C_1(G/e, \RR)$ as a subspace of $C_1(G, \RR)$ via the natural map 
\[\iota \colon C_1(G/e, \RR) \hookrightarrow C_1(G, \RR)\, .\]
Let $\pi'_G \colon C_1(G, \RR) \twoheadrightarrow H_1(G, \RR)^{\perp}$ and $\pi'_{G/e} \colon C_1(G/e, \RR) \twoheadrightarrow H_1(G/e, \RR)^{\perp}$ denote the orthogonal projections.

 Let $W$ be the orthogonal complement of $\pi'_G(e)$ inside $H_1(G, \RR)^{\perp}$:  
\[ W = \span \{\pi'_G(e)\}^{\perp}  \cap H_1(G, \RR)^{\perp} \, .\]
Let $\Proj \colon H_1(G, \RR)^{\perp} \twoheadrightarrow W$ denote the corresponding orthogonal projection map. 
\begin{Proposition} \label{prop:diagram}
The map $\iota$ restricts to an isomorphism $\bar{\iota} \colon H_1(G/e, \RR)^{\perp} \xrightarrow{\sim} W$. Moreover the following diagram commutes:
\begin{equation} 
\xymatrix @R=.3in{
  {C_1(G/e, \RR)} \ar@{^{(}->}[r]^{\iota} \ar@{->>}[dd]_{\pi'_{G/e}} & {C_1(G, \RR)} \ar@{->>}[d]^{ \pi'_G} \\
  & {H_1(G, \RR)^{\perp}} \ar@{->>}[d]^{ \Proj} \\
  {H_1(G/e, \RR)^{\perp}} \ar[r]_{\bar{\iota}}^{\sim} & {W} }
  \end{equation}
\end{Proposition}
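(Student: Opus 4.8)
The plan is to establish the isomorphism $\bar{\iota}$ first, and then verify commutativity of the diagram by a direct chase through the two paths from $C_1(G/e,\RR)$ to $W$. The central geometric fact is that contracting the edge $e$ corresponds, on the level of homology, to quotienting out (or rather cutting along) the cycle directions involving $e$. Concretely, I expect that $H_1(G/e,\RR)$ sits inside $H_1(G,\RR)$ as the subspace of cycles that do not ``use'' the edge $e$, so that $\iota$ identifies $H_1(G/e,\RR)$ with $H_1(G,\RR)\cap \span\{e\}^{\perp}$ — equivalently, with cycles orthogonal to the coboundary direction picked out by $e$.

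First I would pin down the image $\iota(C_1(G/e,\RR))$ as the hyperplane $\span\{e\}^{\perp}$ in $C_1(G,\RR)$ with respect to the inner product $[\cdot,\cdot]$; this is immediate from the definition of $\iota$ and the fact that $E(G/e) = E(G)\setminus e$, since $[e,f]=0$ for $f\neq e$. Next I would identify $H_1(G/e,\RR)^{\perp}$ inside this hyperplane and show $\bar{\iota}$ lands in $W$. The key dimension count is that $\dim H_1(G/e,\RR) = \dim H_1(G,\RR) - 1$ (contracting a non-loop edge drops the first Betti number by one, since $e$ lies in some cycle as $G$ is connected and $e$ is not a loop), hence $\dim H_1(G/e,\RR)^{\perp} = \dim H_1(G,\RR)^{\perp}$ computed inside the respective ambient spaces; comparing with $\dim W = \dim H_1(G,\RR)^{\perp}-1$ gives the matching dimensions. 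To show $\bar\iota$ is well-defined into $W$, I would check that any vector in $H_1(G/e,\RR)^{\perp}\subseteq C_1(G/e,\RR)$, viewed in $C_1(G,\RR)$, is orthogonal both to $H_1(G,\RR)$ and to $\pi'_G(e)$; the second orthogonality should follow from the first together with membership in $\span\{e\}^{\perp}$, since $\pi'_G(e) - e \in H_1(G,\RR)$.

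For the commutativity, I would take an arbitrary $\bc \in C_1(G/e,\RR)$ and compute $\Proj(\pi'_G(\iota(\bc)))$, aiming to match it with $\bar{\iota}(\pi'_{G/e}(\bc))$. The clean way is to use the defining orthogonality characterizations of the projections rather than matrices: write $\iota(\bc) = \pi'_G(\iota(\bc)) + h$ with $h\in H_1(G,\RR)$, then apply $\Proj$ to kill the $\pi'_G(e)$-component, landing in $W$. Separately, $\pi'_{G/e}(\bc)$ is characterized by $\bc - \pi'_{G/e}(\bc)\in H_1(G/e,\RR)$ with $\pi'_{G/e}(\bc)\in H_1(G/e,\RR)^{\perp}$. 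Transporting the latter via $\bar\iota$ and using $\bar\iota(H_1(G/e,\RR)) = H_1(G,\RR)\cap\span\{e\}^{\perp}$, I would verify that $\bar\iota(\pi'_{G/e}(\bc))$ satisfies exactly the orthogonality conditions that characterize $\Proj(\pi'_G(\iota(\bc)))$ inside $W$, namely that it lies in $W$ and differs from $\iota(\bc)$ by an element of $H_1(G,\RR)+\span\{\pi'_G(e)\}$. Uniqueness of orthogonal decomposition then forces equality.

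The main obstacle I anticipate is the bookkeeping in the identification $\bar\iota(H_1(G/e,\RR)) = H_1(G,\RR)\cap\span\{e\}^{\perp}$ and, relatedly, proving that $\Proj\circ\pi'_G\circ\iota$ genuinely has image equal to all of $W$ (surjectivity) rather than a proper subspace — this is where the precise interplay between the cutting of cycles through $e$ and the orthogonal complement must be handled carefully. Once that identification is in place, the diagram chase reduces to matching two orthogonal decompositions in the same inner product space, which is routine. I would most likely lean on the observation that $e = \pi_G(e) + \pi'_G(e)$ with $\pi_G(e)\in H_1(G,\RR)$, so that adjoining $e$ to $C_1(G/e,\RR)$ and adjoining $\pi'_G(e)$ to $W$ are compatible operations, making the splitting $H_1(G,\RR)^{\perp} = W \oplus \span\{\pi'_G(e)\}$ the orthogonal-complement shadow of the splitting $C_1(G,\RR) = \iota(C_1(G/e,\RR))\oplus\span\{e\}$.
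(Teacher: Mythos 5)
Your proposal rests on two false statements about the homology of the contraction, and both are fatal as written. First, contracting a non-loop edge does \emph{not} drop the first Betti number: $G/e$ has $n-1$ vertices, $m-1$ edges, and is still connected, so $\dim H_1(G/e,\RR)=(m-1)-(n-1)+1=m-n+1=\dim H_1(G,\RR)$ (contraction of an edge is a homotopy equivalence; it is \emph{deletion} of a non-bridge edge that drops $b_1$). Your dimension count is then internally inconsistent: from your value you would get $\dim H_1(G/e,\RR)^{\perp}=(m-1)-(m-n)=n-1$, while $\dim W=\dim H_1(G,\RR)^{\perp}-1=n-2$; these do \emph{not} match, so your own numbers would show $\bar{\iota}$ cannot be an isomorphism. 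With the correct Betti number one gets $\dim H_1(G/e,\RR)^{\perp}=(m-1)-(m-n+1)=n-2=\dim W$, which is the count the paper uses. Second, your ``central geometric fact'' $\iota(H_1(G/e,\RR))=H_1(G,\RR)\cap\span\{e\}^{\perp}$ is false; indeed $\iota(H_1(G/e,\RR))$ is in general not even contained in $H_1(G,\RR)$. Take $G$ a triangle with oriented edges $e\colon a\to b$, $f\colon b\to c$, $g\colon c\to a$. Then $H_1(G/e,\RR)$ is spanned by $f+g$, but in $G$ one has $\partial(\iota(f+g))=(c-b)+(a-c)=a-b\neq 0$, whereas $H_1(G,\RR)\cap\span\{e\}^{\perp}=\{0\}$ since $[e+f+g,e]=\ell(e)\neq 0$. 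A cycle of $G/e$ passing through the contracted vertex lifts to a cycle of $G$ only after adding a multiple of $e$.

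The fact you actually need — and the one the paper's proof isolates — goes through the projection $\pi_*\colon C_1(G,\RR)\twoheadrightarrow C_1(G/e,\RR)$ (with $\Ker\pi_*=\span\{e\}$) rather than through $\iota$: $\pi_*$ carries cycles to cycles and restricts to an isomorphism $H_1(G,\RR)\isom H_1(G/e,\RR)$, whence $\pi_*^{-1}H_1(G/e,\RR)=H_1(G,\RR)+\span\{e\}$, i.e.\ $\iota(H_1(G/e,\RR))\subseteq H_1(G,\RR)+\span\{e\}$, with a correction in $\span\{e\}$ that your identification erases. (The paper proves this as the equality $K=L$ via the inclusion $K\subseteq L$ and a dimension count, pulls back along $\iota$ to get $\Ker(\Proj\circ\pi'_G\circ\iota)=H_1(G/e,\RR)$, and concludes with $\dim H_1(G/e,\RR)^{\perp}=n-2=\dim W$.) Your commutativity argument is repairable once this substitution is made: what you need is precisely $\iota(x)-\iota(\pi'_{G/e}(x))\in H_1(G,\RR)+\span\{\pi'_G(e)\}$ for $x\in C_1(G/e,\RR)$, and since $H_1(G,\RR)+\span\{\pi'_G(e)\}=H_1(G,\RR)+\span\{e\}$, the displayed inclusion is exactly what is required. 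Similarly, your two orthogonality checks for well-definedness are true, but the first one ($\iota(x)\perp H_1(G,\RR)$ for $x\in H_1(G/e,\RR)^{\perp}$) must also be derived through $\pi_*$: for $z\in H_1(G,\RR)$ write $z=\iota(\pi_*z)+\lambda e$, so $[\iota(x),z]=[x,\pi_*z]=0$; your second check (using $\pi'_G(e)-e\in H_1(G,\RR)$) is fine. As written, however, the proposal does not prove the proposition.
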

\begin{proof} Let $\pi_* \colon C_1(G,\RR) \twoheadrightarrow C_1(G/e,\RR)$ denote the canonical projection, and observe that $\Ker \pi_* = \span\{ e\}$. Write $K=\Ker(\Proj \circ \pi'_G)=\span \{ e \} + H_1(G,\RR)$ and $L=\pi_*^{-1}H_1(G/e,\RR)$.  The map $\pi_*$ induces an isomorphism $H_1(G,\RR) \isom H_1(G/e,\RR)$ upon restriction. Since clearly $K \subseteq L$ and $\dim K = m-n+2 = \dim L$ we find the equality $K=L$.  As $\iota$ splits $\pi_*$ we have $\iota^{-1}K = \iota^{-1}L=H_1(G/e,\RR)$. This shows that the inclusion $\iota \colon C_1(G/e,\RR) \hookrightarrow C_1(G,\RR)$ induces an injective map $\bar{\iota} \colon H_1(G/e,\RR)^\perp \hookrightarrow W$.   As $\dim H_1(G/e,\RR)^\perp = n-2 = \dim W$ we conclude that $\bar{\iota}$ is an isomorphism. This proves the proposition.
\end{proof}

\subsection{Generalized Rayleigh's laws}

We are now ready to state and prove our main results. 

\begin{Theorem}[Rayleigh's law for energy pairings]\label{thm:GenRayleighEnergy}
Let $\Gamma$ be a metric graph. Let $e$ be an edge segment of $\Gamma$, and let $\nu_1, \nu_2 \in \dmeasz(\Gamma)$.  Then
\[
\langle \nu_1, \nu_2 \rangle_{\en}^{\Gamma/e} = 
\langle \nu_1, \nu_2 \rangle_{\en}^{\Gamma} 
- \frac{\langle \nu_1, \delta_{e^+} - \delta_{e^-} \rangle_{\en}^{\Gamma} \ \,
\langle \delta_{e^+} - \delta_{e^-}, \nu_2 \rangle_{\en}^{\Gamma}}
{r(e^-, e^+; \Gamma)}\, .
\]
In particular, for $\nu\in \dmeasz(\Gamma)$, we have $\langle \nu, \nu \rangle_{\en}^{\Gamma/e}  \leq \langle \nu, \nu \rangle_{\en}^{\Gamma}$.
\end{Theorem}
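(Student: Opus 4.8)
The plan is to reduce the statement to a clean linear-algebraic identity using the projection-based formula for the energy pairing (Proposition~\ref{prop:energyformula}) together with the geometry of the contraction captured in Proposition~\ref{prop:diagram}. First I would fix a model $G$ with $e \in E(G)$ that is compatible with $\nu_1,\nu_2$, and choose $1$-chains $\boldsymbol{\gamma}_1,\boldsymbol{\gamma}_2 \in C_1(G,\RR)$ with $\partial\boldsymbol{\gamma}_i = \nu_i$. The key point is that, because $\nu_1,\nu_2 \in \dmeasz(\Gamma/e)$ as well, I can arrange the $\boldsymbol{\gamma}_i$ to lie in the subspace $C_1(G/e,\RR) \hookrightarrow C_1(G,\RR)$, so that the \emph{same} chains compute both pairings: $\langle \nu_1,\nu_2\rangle_{\en}^{\Gamma} = [\boldsymbol{\gamma}_1,\pi'_G(\boldsymbol{\gamma}_2)]$ and $\langle \nu_1,\nu_2\rangle_{\en}^{\Gamma/e} = [\boldsymbol{\gamma}_1,\pi'_{G/e}(\boldsymbol{\gamma}_2)]$.

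The crux is then to compare $\pi'_G$ and $\pi'_{G/e}$ on $C_1(G/e,\RR)$. By Proposition~\ref{prop:diagram}, the diagram identifies $\pi'_{G/e}$ (up to the isomorphism $\bar\iota$) with $\Proj \circ \pi'_G \circ \iota$, where $\Proj$ is orthogonal projection of $H_1(G,\RR)^\perp$ onto $W = \span\{\pi'_G(e)\}^\perp \cap H_1(G,\RR)^\perp$. Thus on the subspace $C_1(G/e,\RR)$ the two projections differ precisely by the rank-one correction that removes the $\pi'_G(e)$-component. Explicitly, for any $\boldsymbol{\gamma} \in H_1(G,\RR)^\perp$ the orthogonal projection onto $W$ is
\[
\Proj(\boldsymbol{\gamma}) = \boldsymbol{\gamma} - \frac{[\boldsymbol{\gamma},\pi'_G(e)]}{[\pi'_G(e),\pi'_G(e)]}\,\pi'_G(e)\,.
\]
Applying this with $\boldsymbol{\gamma} = \pi'_G(\boldsymbol{\gamma}_2)$ and pairing against $\boldsymbol{\gamma}_1$, the self-adjointness and idempotence of $\pi'_G$ let me rewrite the correction term as a product of two scalars, each of the form $[\boldsymbol{\gamma}_i,\pi'_G(e)]$, divided by $[\pi'_G(e),\pi'_G(e)]$.

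The final step is to identify these scalars with energy pairings. Since $\partial e = \delta_{e^+} - \delta_{e^-}$, Proposition~\ref{prop:energyformula} gives $[\boldsymbol{\gamma}_1,\pi'_G(e)] = \langle \nu_1, \delta_{e^+}-\delta_{e^-}\rangle_{\en}^{\Gamma}$, and symmetrically for $\boldsymbol{\gamma}_2$; and by Remark~\ref{rmk:Thomson} (Thomson's principle, Corollary~\ref{cor:rformula}) the normalizing denominator is $[\pi'_G(e),\pi'_G(e)] = r(e^-,e^+;\Gamma)$. Substituting yields exactly the claimed formula. The final inequality for $\nu_1=\nu_2=\nu$ is then immediate: the subtracted term equals $\langle\nu,\delta_{e^+}-\delta_{e^-}\rangle_{\en}^2/r(e^-,e^+)$, which is non-negative because $r(e^-,e^+) > 0$ (Remark~\ref{rmk:gromov}~(ii)) and the numerator is a square. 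I expect the main obstacle to be the bookkeeping in the second step—verifying that the chains can be taken inside $C_1(G/e,\RR)$ and that $\pi'_{G/e}$ really corresponds to $\Proj\circ\pi'_G$ under $\bar\iota$—but Proposition~\ref{prop:diagram} is tailored to supply exactly this, so once the diagram is invoked the computation is routine.
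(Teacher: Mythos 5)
Your plan follows the same route as the paper's own proof---Proposition~\ref{prop:energyformula} to express both pairings as $[\cdot\,,\pi'(\cdot)]$, Proposition~\ref{prop:diagram} to identify $\pi'_{G/e}$ with $\Proj\circ\pi'_G$, the rank-one formula for $\Proj$, self-adjointness and idempotence of $\pi'_G$, and Thomson's principle (Corollary~\ref{cor:rformula}) for the denominator---so most of the steps are sound. However, there is one genuine gap, and it sits exactly at your ``key point'': the claim that you can choose $\boldsymbol{\gamma}_i$ inside $\iota(C_1(G/e,\RR))\subseteq C_1(G,\RR)$ with $\partial\boldsymbol{\gamma}_i=\nu_i$, where $\partial$ is the boundary map of $G$. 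The image of $\partial$ restricted to chains not using $e$ consists precisely of the measures having zero total mass on each connected component of $G\setminus e$; so your claim is correct exactly when $e$ is not a bridge, and fails otherwise. A concrete failure: let $\Gamma$ be a path with vertices $u$, $e^-$, $e^+$ and edges $f=\{u,e^-\}$, $e=\{e^-,e^+\}$, and take $\nu_1=\nu_2=\delta_{e^+}-\delta_u$. Every chain avoiding $e$ is a multiple of $f$, whose boundary can never equal $\nu_1$. If you instead take $\boldsymbol{\gamma}_1=\boldsymbol{\gamma}_2=f$ (which has the correct boundary in $G/e$), then both identities you need are false (here $\pi'_G=\mathrm{id}$ since $G$ is a tree): $[\boldsymbol{\gamma}_1,\pi'_G(\boldsymbol{\gamma}_2)]=\ell(f)$ while $\langle\nu_1,\nu_2\rangle_{\en}^{\Gamma}=\ell(f)+\ell(e)$, and $[\boldsymbol{\gamma}_1,\pi'_G(e)]=0$ while $\langle\nu_1,\delta_{e^+}-\delta_{e^-}\rangle_{\en}^{\Gamma}=\ell(e)$. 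So as written your argument proves the theorem only for non-bridge edge segments (more generally, only when each $\nu_i$ has zero net mass on both sides of a bridge $e$).

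The repair is to drop the restriction and take arbitrary $\boldsymbol{\gamma}_i\in C_1(G,\RR)$ with $\partial\boldsymbol{\gamma}_i=\nu_i$, which always exist. Writing $\boldsymbol{\gamma}_i=\iota(\pi_*\boldsymbol{\gamma}_i)+c_i\,e$, where $\pi_*\colon C_1(G,\RR)\twoheadrightarrow C_1(G/e,\RR)$ is the projection killing $e$, two observations make your computation go through verbatim: first, $\Proj(\pi'_G(e))=0$, so the $c_i\,e$ summand disappears after applying $\Proj\circ\pi'_G$; second, $[e,w]=0$ for every $w\in W$ (because $W\perp\pi'_G(e)$ and $W\subseteq H_1(G,\RR)^{\perp}$ while $\pi_G(e)\in H_1(G,\RR)$), so it also disappears from the left slot of the pairing. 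Hence $\langle\nu_1,\nu_2\rangle_{\en}^{\Gamma/e}=[\boldsymbol{\gamma}_1,(\Proj\circ\pi'_G)(\boldsymbol{\gamma}_2)]$ holds for arbitrary chains, and your rank-one expansion then yields the theorem in all cases. This is in effect what the paper does: its proof works with arbitrary chains and carries out the same cancellation as a matrix identity, using \eqref{eq:XiMatrix} and the matrix $\D^{-1}\Xib-r(e^-,e^+)^{-1}\D^{-1}(\Xib[e])(\Xib[e])^{\T}$ of $\Proj\circ\pi'_G$ in the basis $\Oc$.
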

\begin{proof}
Let $G$ be a model of $\Gamma$ determined by a vertex set $V$ so that $e \in E(G)$ and $V/\{e^-,e^+\}$ is a vertex set of $\Gamma/e$ (see \S\ref{sec:contractions}). Assume moreover that $V$ is  taken fine enough so that $\nu_1, \nu_2 \in \dmeasz(G)$. We choose an orientation $\Oc = \{e_1, \ldots , e_m\}$. Let $e$ also denote the corresponding oriented edge in $\Oc$. Let $\boldsymbol{\gamma}_1 , \boldsymbol{\gamma}_2 \in C_1(G, \RR)$ be such that $\partial (\boldsymbol{\gamma}_i) = \nu_i$. We have a well-defined model $G/e$ of $\Gamma/e$.

By Proposition~\ref{prop:energyformula}, we know:
\begin{equation}\label{eq:xicontract}
 \langle \nu_1, \nu_2 \rangle_{\en}^{\Gamma/e} = [\boldsymbol{\gamma}_{1}, \pi'_{G/e}(\boldsymbol{\gamma}_{2})]
\end{equation}
By Proposition \ref{prop:diagram}, we know $\pi'_{G/e}$ corresponds to $\Proj \circ \, \pi'_{G}$ via $\iota$. 

For $e_j \in \Oc$ we have:
\[
\begin{aligned}
\Proj \circ \pi'_G(e_j) &= \pi'_G(e_j) - \frac{[\pi'_G(e), \pi'_G(e_j)]}{[\pi'_G(e), \pi'_G(e)]}\pi'_G(e)\\
&= \pi'_G(e_j) - \frac{[e, \pi'_G(e_j)]}{[e, \pi'_G(e)]}\pi'_G(e)\\
&= \pi'_G(e_j) - \frac{\F'(e,e_j) \ell(e)}{\F'(e,e) \ell(e)}\pi'_G(e) \\
& = \pi'_G(e_j) - \frac{\F'(e,e_j) }{\F'(e,e) } \sum_{i=1}^m{\F'(e_i, e) e_i} \\
& = \pi'_G(e_j) - \frac{1}{r(e^-, e^+)} \sum_{i=1}^m{\frac{\xi(e_i^-, e_i^+ ,e^-, e^+) \xi(e^-, e^+ ,e_j^-, e_j^+)}{\ell(e_i)}\,e_i} \, .
\end{aligned}
\]
We used Corollary~\ref{cor:Fs} for the third, fourth, and fifth equalities. 

It follows from this computation that the matrix of $\pi'_{G/e} = \Proj \circ \, \pi'_G$, with respect to the basis $\Oc$, is given by
\begin{equation}\label{eq:S}
\Sb = \D^{-1}\Xib - \frac{1}{r(e^-, e^+)} \D^{-1} (\Xib [e]) (\Xib [e])^{\T}   \, .
\end{equation}
Recall from Remark~\ref{rmk:mat}~(i) that $[e]$ denotes the column vector with a $1$ on the row corresponding to $e$, and $0$'s everywhere else. 
The result now follows from \eqref{eq:xicontract} and the following straightforward matrix computation:
\[
\begin{aligned}
[\boldsymbol{\gamma}_{1}, \pi'_{G/e}(\boldsymbol{\gamma}_{2})]  
&=[\boldsymbol{\gamma}_{1}]^{\T} \D \left(\Sb[\boldsymbol{\gamma}_{2}]\right)\\
&=[\boldsymbol{\gamma}_{1}]^{\T}\Xib[\boldsymbol{\gamma}_{2}] - 
\frac{1}{r(e^-, e^+)}[\boldsymbol{\gamma}_{1}]^{\T}\left(\Xib [e]\right) \left(\Xib [e]\right)^{\T}[\boldsymbol{\gamma}_{2}]\\
&=[\boldsymbol{\gamma}_{1}]^{\T}\B^{\T} \L \B[\boldsymbol{\gamma}_{2}] - \frac{1}{r(e^-, e^+)}
\left([\boldsymbol{\gamma}_{1}]^{\T}\B^{\T} \L \B [e] \right) \left([e]^{\T} \B^{\T} \L \B [\boldsymbol{\gamma}_{2}]\right)\\
&=[\nu_1]^{\T} \L [\nu_2] 
 - \frac{1}{r(e^-, e^+)}\left([\nu_1]^{\T} \L[\delta_{e^+} - \delta_{e^-}]\right) \left( [\delta_{e^+} - \delta_{e^-}]^{\T} \L [\nu_2]\right)\\
&=\langle \nu_1, \nu_2 \rangle_{\en} 
- \frac{1}{r(e^-, e^+)}\left( \langle \nu_1, \delta_{e^+} - \delta_{e^-} \rangle_{\en} \ 
\langle \delta_{e^+} - \delta_{e^-}, \nu_2 \rangle_{\en}\right)\, .
\end{aligned}
\]
We used Remark~\ref{rmk:mat}~(iii), \eqref{eq:S}, \eqref{eq:XiMatrix}, Remark~\ref{rmk:mat}~(ii), Lemma~\ref{lem:pairings}, and Remark~\ref{rmk:cross_en}~(ii) in this computation.
\end{proof}

\begin{Corollary}[Rayleigh's law for cross ratios]\label{cor:GenRayleighCross}
Let $\Gamma$ be a metric graph. Let $e$ be an edge segment of $\Gamma$.  Then
\[
\xi (x,y,z,w ; \Gamma/e) = \xi(x,y , z,w ; \Gamma) -   \frac{\xi(x,y , e^-, e^+; \Gamma) \, \xi(z,w , e^-, e^+;\Gamma)}{r(e^-, e^+; \Gamma)} \, .
\]
\end{Corollary}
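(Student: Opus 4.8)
The plan is to obtain this identity as a direct specialization of Theorem~\ref{thm:GenRayleighEnergy}, with no new geometric input. The whole content of the statement already lives in that theorem; translating it into the language of cross ratios is pure bookkeeping, via the identity $\xi(a,b,c,d;\Gamma) = \langle \delta_a-\delta_b, \delta_c-\delta_d \rangle_{\en}^{\Gamma}$ recorded in Remark~\ref{rmk:cross_en}~(ii).

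Concretely, I would apply Theorem~\ref{thm:GenRayleighEnergy} to the measures $\nu_1 = \delta_x - \delta_y$ and $\nu_2 = \delta_z - \delta_w$, both of which lie in $\dmeasz(\Gamma)$. By Remark~\ref{rmk:cross_en}~(ii) the left-hand side $\langle \nu_1, \nu_2 \rangle_{\en}^{\Gamma/e}$ equals $\xi(x,y,z,w;\Gamma/e)$, and the leading term $\langle \nu_1, \nu_2 \rangle_{\en}^{\Gamma}$ equals $\xi(x,y,z,w;\Gamma)$. It then remains only to rewrite the numerator $\langle \nu_1, \delta_{e^+}-\delta_{e^-} \rangle_{\en}^{\Gamma}\,\langle \delta_{e^+}-\delta_{e^-}, \nu_2 \rangle_{\en}^{\Gamma}$ in terms of cross ratios.

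The single point that requires care is the sign coming from the orientation of $e$: the theorem pairs against $\delta_{e^+}-\delta_{e^-}$, while the target notation orders the endpoints as $(e^-,e^+)$. I would invoke the antisymmetry $\xi(a,b,d,c) = -\xi(a,b,c,d)$, immediate from Definition~\ref{def:xi}, together with the symmetry $\xi(a,b,c,d)=\xi(c,d,a,b)$ from Lemma~\ref{lem:CrossProp}~(b), to compute
\[
\langle \nu_1, \delta_{e^+}-\delta_{e^-} \rangle_{\en}^{\Gamma} = \xi(x,y,e^+,e^-;\Gamma) = -\,\xi(x,y,e^-,e^+;\Gamma),
\]
and likewise
\[
\langle \delta_{e^+}-\delta_{e^-}, \nu_2 \rangle_{\en}^{\Gamma} = \xi(e^+,e^-,z,w;\Gamma) = -\,\xi(z,w,e^-,e^+;\Gamma).
\]
The two sign changes cancel in the product, so the correction term becomes exactly $\xi(x,y,e^-,e^+;\Gamma)\,\xi(z,w,e^-,e^+;\Gamma)/r(e^-,e^+;\Gamma)$, which is the claimed formula.

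I do not expect any genuine obstacle: the result is a formal consequence of Theorem~\ref{thm:GenRayleighEnergy}, and the only subtlety is verifying that the orientation-dependent signs cancel — which they do cleanly, precisely because $\delta_{e^+}-\delta_{e^-}$ appears symmetrically in both factors of the numerator.
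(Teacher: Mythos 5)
Your proposal is correct and follows exactly the paper's route: the paper likewise obtains the corollary by applying Theorem~\ref{thm:GenRayleighEnergy} to $\nu_1 = \delta_x - \delta_y$ and $\nu_2 = \delta_z - \delta_w$, citing Remark~\ref{rmk:cross_en}~(ii) for the translation into cross ratios and Lemma~\ref{lem:CrossProp} for the sign bookkeeping. Your write-up merely makes explicit the cancellation of the two orientation signs, which the paper leaves implicit.
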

\begin{proof}
This is Theorem~\ref{thm:GenRayleighEnergy} applied to $\nu_1 = \delta_x - \delta_y$ and $\nu_1 = \delta_z - \delta_w$. See Lemma~\ref{lem:CrossProp} and Remark~\ref{rmk:cross_en}~(ii).
\end{proof}
\begin{Corollary}[Rayleigh's law for $j$-functions]\label{cor:GenRayleighJ} Let $\Gamma$ be a metric graph. Let $e$ be an edge segment of $\Gamma$. Then
\[
j_z(x,y ; \Gamma/e) = j_z(x,y ; \Gamma) -  \frac{\xi(x,z , e^-, e^+ ; \Gamma) \, \xi(y,z , e^-, e^+; \Gamma)}{r(e^-, e^+; \Gamma)} \, .
\]
\end{Corollary}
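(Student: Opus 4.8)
The plan is to deduce this statement as a direct specialization of Corollary~\ref{cor:GenRayleighCross} (Rayleigh's law for cross ratios), exactly as the nearby Corollaries~\ref{cor:crossformula}, \ref{cor:jformula}, and~\ref{cor:rformula} are obtained by specializing the relevant general identity. The key structural fact I would use is the one recorded in the introduction, namely that the $j$-function is an evaluation of the cross ratio function: $j_z(x,y ; \Gamma) = \xi(x,z , y,z ; \Gamma)$, and that this identity holds on any metric graph, in particular on both $\Gamma$ and $\Gamma/e$. One checks it immediately by evaluating $\xi$ with respect to the base point $q=z$, since then $j_z(z,z)=j_z(x,z)=j_z(z,y)=0$ leaves only $j_z(x,y)$.

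Concretely, I would start from the cross ratio law
\[
\xi (a,b,c,d ; \Gamma/e) = \xi(a,b , c,d ; \Gamma) -   \frac{\xi(a,b , e^-, e^+; \Gamma) \, \xi(c,d , e^-, e^+;\Gamma)}{r(e^-, e^+; \Gamma)}
\]
and perform the substitution $(a,b,c,d) \mapsto (x,z,y,z)$. The left-hand side becomes $\xi(x,z,y,z;\Gamma/e) = j_z(x,y;\Gamma/e)$, and the first term on the right becomes $\xi(x,z,y,z;\Gamma) = j_z(x,y;\Gamma)$. The two factors in the correction term become $\xi(x,z,e^-,e^+;\Gamma)$ and $\xi(y,z,e^-,e^+;\Gamma)$ respectively, so the identity drops out verbatim in the claimed form.

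There is essentially no obstacle here: the entire content is the bookkeeping of the substitution, together with the observation that the evaluation identity $j_z(x,y)=\xi(x,z,y,z)$ is valid on the contracted graph $\Gamma/e$ as well (which is automatic, since the identity is a formal consequence of Definition~\ref{def:xi} and property (ii) of the $j$-function, holding on every metric graph). The only point to be slightly careful about is matching the argument slots correctly when specializing, so that the correction term pairs $(x,z)$ with the edge $e$ and $(y,z)$ with the edge $e$, rather than some other grouping. Once the substitution is fixed, no further computation is required, and in particular one does not need to revisit the model $G$, the projection $\pi'_{G/e}$, or the matrix $\Sb$ used in the proof of Theorem~\ref{thm:GenRayleighEnergy}.

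\begin{proof}
This follows from Corollary~\ref{cor:GenRayleighCross} by specializing to the evaluation $j_z(x,y ; \Gamma) = \xi(x,z , y,z ; \Gamma)$, valid on both $\Gamma$ and $\Gamma/e$.
\end{proof}
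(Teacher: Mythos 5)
Your proposal is correct and is exactly the paper's own proof: the authors also obtain this corollary by applying Corollary~\ref{cor:GenRayleighCross} to the $4$-tuple $(x,z,y,z)$, using the evaluation identity $j_z(x,y)=\xi(x,z,y,z)$. Your additional verification of that identity via the base point $q=z$ (and its validity on $\Gamma/e$) is a fine, if routine, elaboration of the same argument.
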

\begin{proof}
This is Corollary~\ref{cor:GenRayleighCross} applied to the $4$-tuple $(x,z,y,z)$.
\end{proof}
\begin{Corollary}[A quantitative Rayleigh's monotonicity law for resistances]\label{cor:EffRayleigh}
Let $\Gamma$ be a metric graph. Let $e$ be an edge segment of $\Gamma$. Then
\[
r(x,y ; \Gamma/e) =r(x,y ; \Gamma) -  \frac{\xi(x,y , e^-, e^+; \Gamma) ^2}{r(e^-, e^+; \Gamma)} \, .
\]
In particular, $ r(x,y ; \Gamma/e) \leq r(x,y ; \Gamma)$.
\end{Corollary}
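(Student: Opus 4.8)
The plan is to specialize the cross-ratio form of Rayleigh's law that has just been proved. Recall the diagonal evaluation recorded in the introduction and reestablished in Example~\ref{ex:GromovProd}, namely $r(x,y;\Gamma)=\xi(x,y,x,y;\Gamma)$, and note that the same identity holds verbatim on the contracted graph $\Gamma/e$. So I would obtain the statement by applying Corollary~\ref{cor:GenRayleighCross} to the $4$-tuple $(x,y,x,y)$, in exact parallel with the way Corollary~\ref{cor:GenRayleighJ} was deduced by feeding it the tuple $(x,z,y,z)$.

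Carrying this out, setting $(z,w)=(x,y)$ in Corollary~\ref{cor:GenRayleighCross} collapses the two cross-ratio factors in the numerator to $\xi(x,y,e^-,e^+;\Gamma)$ and $\xi(x,y,e^-,e^+;\Gamma)$, whose product is $\xi(x,y,e^-,e^+;\Gamma)^2$; meanwhile the outer cross ratios $\xi(x,y,x,y;\Gamma/e)$ and $\xi(x,y,x,y;\Gamma)$ become $r(x,y;\Gamma/e)$ and $r(x,y;\Gamma)$ by the identity above. This produces the displayed equality directly, with nothing beyond substitution required.

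For the concluding inequality, I would observe that the subtracted correction term is manifestly nonnegative: its numerator is a square, and its denominator $r(e^-,e^+;\Gamma)$ is strictly positive, since $e^-$ and $e^+$ are distinct (being the boundary points of a genuine edge segment) and the effective resistance is a genuine metric, hence positive on distinct points by Remark~\ref{rmk:gromov}~(ii). Therefore $r(x,y;\Gamma/e)\le r(x,y;\Gamma)$. There is essentially no obstacle in this argument; the one point deserving a word of care is precisely the positivity of $r(e^-,e^+;\Gamma)$, which simultaneously guarantees that the right-hand side is well defined and that the correction carries the sign needed for monotonicity.
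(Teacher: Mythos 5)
Your proposal is correct and is exactly the paper's own argument: specialize Corollary~\ref{cor:GenRayleighCross} to the $4$-tuple $(x,y,x,y)$ and use $r(\cdot,\cdot)=\xi(\cdot,\cdot,\cdot,\cdot)$ on both $\Gamma$ and $\Gamma/e$. Your additional remark on positivity of $r(e^-,e^+;\Gamma)$, justifying both well-definedness and the monotonicity inequality, is a fine (and correct) elaboration of what the paper leaves implicit.
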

\begin{proof}
This is Corollary~\ref{cor:GenRayleighCross} applied to the $4$-tuple $(x,y,x,y)$.
\end{proof}
The last statement in Corollary~\ref{cor:EffRayleigh} is equivalent to the classical  Rayleigh's monotonicity law.

\begin{bibdiv}
\begin{biblist}

\bib{abks}{article}{
      author={An, Yang},
      author={Baker, Matthew},
      author={Kuperberg, Greg},
      author={Shokrieh, Farbod},
       title={Canonical representatives for divisor classes on tropical curves
  and the matrix-tree theorem},
        date={2014},
        ISSN={2050-5094},
     journal={Forum Math. Sigma},
      volume={2},
       pages={e24, 25},
         url={http://dx.doi.org/10.1017/fms.2014.25},
      review={\MR{3264262}},
}

\bib{bffourier}{incollection}{
      author={Baker, Matthew},
      author={Faber, Xander},
       title={Metrized graphs, {L}aplacian operators, and electrical networks},
        date={2006},
   booktitle={Quantum graphs and their applications},
      series={Contemp. Math.},
      volume={415},
   publisher={Amer. Math. Soc., Providence, RI},
       pages={15\ndash 33},
         url={http://dx.doi.org/10.1090/conm/415/07857},
      review={\MR{2277605}},
}

\bib{bf}{article}{
      author={Baker, Matthew},
      author={Faber, Xander},
       title={Metric properties of the tropical {A}bel-{J}acobi map},
        date={2011},
        ISSN={0925-9899},
     journal={J. Algebraic Combin.},
      volume={33},
      number={3},
       pages={349\ndash 381},
         url={http://dx.doi.org/10.1007/s10801-010-0247-3},
      review={\MR{2772537}},
}

\bib{bh}{book}{
      author={Bridson, Martin~R.},
      author={Haefliger, Andr\'e},
       title={Metric spaces of non-positive curvature},
      series={Grundlehren der Mathematischen Wissenschaften},
   publisher={Springer-Verlag, Berlin},
        date={1999},
      volume={319},
        ISBN={3-540-64324-9},
  url={https://doi-org.proxy.library.cornell.edu/10.1007/978-3-662-12494-9},
      review={\MR{1744486}},
}

\bib{Biggs}{article}{
      author={Biggs, Norman},
       title={Algebraic potential theory on graphs},
        date={1997},
        ISSN={0024-6093},
     journal={Bull. London Math. Soc.},
      volume={29},
      number={6},
       pages={641\ndash 682},
         url={http://dx.doi.org/10.1112/S0024609397003305},
      review={\MR{1468054}},
}

\bib{BLPS}{article}{
      author={Benjamini, Itai},
      author={Lyons, Russell},
      author={Peres, Yuval},
      author={Schramm, Oded},
       title={Uniform spanning forests},
        date={2001},
        ISSN={0091-1798},
     journal={Ann. Probab.},
      volume={29},
      number={1},
       pages={1\ndash 65},
  url={https://doi-org.proxy.library.cornell.edu/10.1214/aop/1008956321},
      review={\MR{1825141}},
}

\bib{Bollobas}{book}{
      author={Bollob\'as, B\'ela},
       title={Modern graph theory},
      series={Graduate Texts in Mathematics},
   publisher={Springer-Verlag, New York},
        date={1998},
      volume={184},
        ISBN={0-387-98488-7},
  url={https://doi-org.proxy.library.cornell.edu/10.1007/978-1-4612-0619-4},
      review={\MR{1633290}},
}

\bib{BP}{article}{
      author={Burton, Robert},
      author={Pemantle, Robin},
       title={Local characteristics, entropy and limit theorems for spanning
  trees and domino tilings via transfer-impedances},
        date={1993},
        ISSN={0091-1798},
     journal={Ann. Probab.},
      volume={21},
      number={3},
       pages={1329\ndash 1371},
      review={\MR{1235419}},
}

\bib{br}{article}{
      author={Baker, Matthew},
      author={Rumely, Robert},
       title={Harmonic analysis on metrized graphs},
        date={2007},
        ISSN={0008-414X},
     journal={Canad. J. Math.},
      volume={59},
      number={2},
       pages={225\ndash 275},
  url={https://doi-org.proxy.library.cornell.edu/10.4153/CJM-2007-010-2},
      review={\MR{2310616}},
}

\bib{BR_Book}{book}{
      author={Baker, Matthew},
      author={Rumely, Robert},
       title={Potential theory and dynamics on the {B}erkovich projective
  line},
      series={Mathematical Surveys and Monographs},
   publisher={American Mathematical Society, Providence, RI},
        date={2010},
      volume={159},
        ISBN={978-0-8218-4924-8},
         url={http://dx.doi.org/10.1090/surv/159},
      review={\MR{2599526}},
}

\bib{fm}{article}{
      author={Baker, Matthew},
      author={Shokrieh, Farbod},
       title={Chip-firing games, potential theory on graphs, and spanning
  trees},
        date={2013},
        ISSN={0097-3165},
     journal={J. Combin. Theory Ser. A},
      volume={120},
      number={1},
       pages={164\ndash 182},
         url={http://dx.doi.org/10.1016/j.jcta.2012.07.011},
      review={\MR{2971705}},
}

\bib{cr}{article}{
      author={Chinburg, Ted},
      author={Rumely, Robert},
       title={The capacity pairing},
        date={1993},
        ISSN={0075-4102},
     journal={J. reine angew. Math.},
      volume={434},
       pages={1\ndash 44},
         url={http://dx.doi.org/10.1515/crll.1993.434.1},
      review={\MR{1195689}},
}

\bib{ds}{book}{
      author={Doyle, Peter~G.},
      author={Snell, J.~Laurie},
       title={Random walks and electric networks},
      series={Carus Mathematical Monographs},
   publisher={Mathematical Association of America, Washington, DC},
        date={1984},
      volume={22},
        ISBN={0-88385-024-9},
      review={\MR{920811}},
}

\bib{Flanders}{article}{
      author={Flanders, Harley},
       title={A new proof of {R}. {F}oster's averaging formula in networks},
        date={1974},
     journal={Linear Algebra and Appl.},
      volume={8},
       pages={35\ndash 37},
      review={\MR{0329772}},
}

\bib{Foster}{incollection}{
      author={Foster, Ronald~M.},
       title={The average impedance of an electrical network},
        date={1948},
   booktitle={Reissner {A}nniversary {V}olume, {C}ontributions to {A}pplied
  {M}echanics},
   publisher={J. W. Edwards, Ann Arbor, Michigan},
       pages={333\ndash 340},
      review={\MR{0029773}},
}

\bib{circuit2011}{book}{
      author={Hayt, William~H.},
      author={Kemmerly, Jack},
      author={Durbin, Steven~M.},
       title={Engineering circuit analysis},
     edition={Eighth},
   publisher={McGraw--Hill Education},
        date={2012},
        ISBN={978-0073529578},
}

\bib{HKPYV}{book}{
      author={Hough, J.~Ben},
      author={Krishnapur, Manjunath},
      author={Peres, Yuval},
      author={Vir\'ag, B\'alint},
       title={Zeros of {G}aussian analytic functions and determinantal point
  processes},
      series={University Lecture Series},
   publisher={American Mathematical Society, Providence, RI},
        date={2009},
      volume={51},
        ISBN={978-0-8218-4373-4},
         url={https://doi-org.proxy.library.cornell.edu/10.1090/ulect/051},
      review={\MR{2552864}},
}

\bib{dJSh}{unpublished}{
      author={de~Jong, Robin},
      author={Shokrieh, Farbod},
       title={Tropical moments of tropical {J}acobians},
        date={2018},
        note={Preprint},
}

\bib{Kirchhoff}{article}{
      author={Kirchhoff, Gustav},
       title={Ueber die {A}ufl{\"o}sung der {G}leichungen, auf welche man bei
  der {U}ntersuchung der linearen {V}ertheilung galvanischer {S}tr{\"o}me
  gef{\"u}hrt wird},
        date={1847},
     journal={Annalen der Physik},
      volume={148},
      number={12},
       pages={497\ndash 508},
}

\bib{kr}{article}{
      author={Klein, D.~J.},
      author={Randi\'c, M.},
       title={Resistance distance},
        date={1993},
        ISSN={0259-9791},
     journal={J. Math. Chem.},
      volume={12},
      number={1-4},
       pages={81\ndash 95},
         url={https://doi-org.proxy.library.cornell.edu/10.1007/BF01164627},
        note={Applied graph theory and discrete mathematics in chemistry
  (Saskatoon, SK, 1991)},
      review={\MR{1219566}},
}

\bib{LP}{book}{
      author={Lyons, Russell},
      author={Peres, Yuval},
       title={Probability on trees and networks},
      series={Cambridge Series in Statistical and Probabilistic Mathematics},
   publisher={Cambridge University Press, New York},
        date={2016},
      volume={42},
        ISBN={978-1-107-16015-6},
         url={http://dx.doi.org/10.1017/9781316672815},
      review={\MR{3616205}},
}

\bib{Maxwell}{book}{
      author={Maxwell, James~Clerk},
       title={A treatise on electricity and magnetism},
   publisher={Dover Publications, Inc., New York},
        date={1954},
        note={3d ed, Two volumes bound as one},
      review={\MR{0063293}},
}

\bib{Rayleigh}{article}{
      author={Rayleigh, J. W.~S.},
       title={On the theory of resonance},
        date={1871},
     journal={Phil. Trans. Roy. Soc. London},
      volume={156},
         url={https://doi-org.proxy.library.cornell.edu/10.1007/BF01164627},
}

\bib{SB1959}{book}{
      author={Seshu, Sundaram},
      author={Balabanian, Norman},
       title={Linear network analysis},
   publisher={John Wiley \& Sons, Inc.},
        date={1959},
}

\bib{sw}{unpublished}{
      author={Shokrieh, Farbod},
      author={Wu, Chenxi},
       title={Canonical measures on metric graphs and a {K}azhdan's theorem},
        date={2017},
         url={https://arxiv.org/abs/1711.02609},
        note={Preprint available at
  \href{https://arxiv.org/abs/1711.02609}{{\tt ar{X}iv:1711.02609}}},
}

\bib{Tetali}{article}{
      author={Tetali, Prasad},
       title={Random walks and the effective resistance of networks},
        date={1991},
        ISSN={0894-9840},
     journal={J. Theoret. Probab.},
      volume={4},
      number={1},
       pages={101\ndash 109},
         url={https://doi-org.proxy.library.cornell.edu/10.1007/BF01046996},
      review={\MR{1088395}},
}

\bib{tutte}{book}{
      author={Tutte, W.~T.},
       title={Graph theory},
      series={Encyclopedia of Mathematics and its Applications},
   publisher={Addison-Wesley Publishing Company, Advanced Book Program,
  Reading, MA},
        date={1984},
      volume={21},
        ISBN={0-201-13520-5},
        note={With a foreword by C. St. J. A. Nash-Williams},
      review={\MR{746795}},
}

\bib{zhang93}{article}{
      author={Zhang, Shouwu},
       title={Admissible pairing on a curve},
        date={1993},
        ISSN={0020-9910},
     journal={Invent. Math.},
      volume={112},
      number={1},
       pages={171\ndash 193},
         url={http://dx.doi.org/10.1007/BF01232429},
      review={\MR{1207481}},
}

\end{biblist}
\end{bibdiv}


\end{document}